\newtheorem{theorem}{Theorem}[section]
\theoremstyle{definition}
\newtheorem{definition}[theorem]{Definition}
\theoremstyle{proposition}
\newtheorem{proposition}[theorem]{Proposition}
\theoremstyle{corollary}
\newtheorem{corollary}[theorem]{Corollary}
\numberwithin{equation}{section}
\begin{document}

\title[Lattice path enumeration]%
{Lattice path enumeration for semi-magic squares\\ by Latin rectangles} 


\author{Robert W. Donley, Jr. and Won Geun Kim}
\address{Department of Mathematics and Computer Science,  Queensborough Community College (CUNY), Bayside, NY 11364, USA}
\email{RDonley@qcc.cuny.edu}
\address{Department of Mathematics, Lander College for Men - Touro College and University System, Kew Garden Hills, NY 11367, USA}
\email{won-geun.kim2@touro.edu}


\subjclass[2010]{05A05 05B15 05B20 05C65 15B51}
\keywords{semi-magic square, Latin square, Latin rectangle, hypergraph, incidence matrix, lattice path, Chu-Vandermonde convolution}

\begin{abstract}  
Similar to how standard Young tableaux represent paths in the Young lattice, Latin rectangles may be use to enumerate paths in the poset of semi-magic squares with entries zero or one.  The symmetries associated to determinant preserve this poset, and we completely describe the orbits, covering data, and  maximal chains for squares of size 4, 5, and 6.  The last item gives the number of Latin squares in these cases.  To calculate efficiently for size 6, we in turn identify orbits with certain equivalence classes of hypergraphs.
\end{abstract}

\maketitle

\section{Introduction}

In the Young lattice, paths starting at the minimum element $\hat{0}$ may be identified with standard Young tableaux in the following manner (for instance, \cite{StA}, Ch. 8):
$$
\hat{0}\ \ \to\ \
\begin{ytableau}
\ 1
\end{ytableau}
\  \ \to\   \
\begin{ytableau}
\ 1 & 2
\end{ytableau}
\ \ \to\ \
\begin{ytableau}
\ 1 & 2\\
\ 3 & \none
\end{ytableau}
\ \ \to\ \  
\begin{ytableau}
\ 1 & 2 & 4\\
\ 3 & \none
\end{ytableau}
\ \ \to \ \
\begin{ytableau}
\ 1 & 2 & 4\\
\ 3 &  5 & \none
\end{ytableau}.
$$
In turn, the standard tableaux for a given shape are enumerated by the hook length formula.  

A similar identification of paths may be made in the finite graded poset of semi-magic squares of size $n$.  By the Birkhoff-von Neumann theorem  (\cite{Bkf}, \cite{VN}), every semi-magic square may be constructed by iterated addition of permutation matrices;  if we represent these permutation matrices in single line notation, a path of semi-magic squares is given by a list of these lines, and, if the entries of the semi-magic square are zero or one, these lists form Latin rectangles.  

In \cite{Do}, basic properties of this poset were considered for size three.  For small order, it is reasonable to display the poset diagram of orbits under a familiar group action, and  relevant poset data may be summarized efficiently using homogeneity.  

Key to this work is the ability to draw connections between the following objects:
\begin{enumerate}
\item semi-magic squares with entries zero or one, 
\item Latin rectangles and Latin squares, and
\item hypergraphs that are both $k$-uniform and $k$-regular. 
\end{enumerate} 
For methodology, we favor the language of groups, permutation matrices, and semi-magic squares. We leave it to the reader  to recast the language for the other poset models when not explicit. For example, cycle switching for Latin rectangles (for instance, \cite{Wa1}) changes the representation of a semi-magic square as a sum of permutation matrices, and total path numbers in a given rank correspond to Latin rectangle counts, for which many formulas exist. Other related issues of interest include the face structure of the Birkhoff polytope (for instance, \cite{BS}, or Chapters 8 and 9 in \cite{Bru}) and the role of bipartite graphs there \cite{Phz}. 

In Section 2, we recall basic properties of semi-magic squares, the partial ordering, and the group action. Section 3 notes the definition of Latin rectangles, and general formulas specific to derangements are given in Section 4.  Sections 5 and 6 describe the poset diagrams for sizes four and five, respectively, while size six occupies sections 7 through 11.  In particular, sections 9 and 10 note basic hypergraph definitions and properties.  Finally, we note some basic connections to syzygies and representations of semi-magic squares in Section 12.

An unplanned by-product of this work is the enumeration of Latin squares as maximal chains (Corollary 12.2).  The initial motivation for this project was to find non-trivial applications of  Chu-Vandermonde convolution for finite graded posets; for each case, it is noted as a side comment, but in practice its repeated use was essential to stabilize numerical data and computations. 

For notation, we denote by $S_n$ the symmetric group on $\{1, \dots, n\}$ and by $D_{2n}$ the dihedral group with $2n$ elements.  Typically, when we identify a subgroup of dihedral type, we give  generating elements of order $n$ and $2$, leaving the reader to verify the defining relation $yxy^{-1}=x^{-1}.$

\section{Semi-magic squares and the poset $M(n, s)$}

\begin{definition}
An square matrix $M$ of size $n$ with non-negative integer entries is called a {\bf semi-magic square} with line sum $\rho(M)$ if the sum along any row or column equals $\rho(M).$   Let $M(n)$ denote the monoid of all semi-magic squares of size $n$ under addition.
\end{definition}
As implied by the definition, the set of semi-magic squares is closed under addition and multiplication by non-negative integers.  In fact, every linear combination of permutation matrices with non-negative integral weights is a semi-magic square, and, by the Birkhoff-von Neumann theorem, the converse also holds.

Denote by $G$ be the automorphism group $Aut(M(n))$ of the monoid of semi-magic squares of size $n$.  If $g$ is in $G$, we denote the action of $g$ on the semi-magic square $M$ by $g\cdot M$.  Then $g$ is a bijection on the set of semi-magic squares that respects the operation of addition and preserves the zero matrix:
$$g\cdot (M+N) = g\cdot M + g\cdot N, \qquad g\cdot 0 = 0.$$

In fact, we have

\begin{theorem}[\cite{LTT}, Theorem 2.2] Let $G$ be the group of automorphisms of $M(n).$  Then $G$ is isomorphic to the wreath product $S_n\wr \mathbb{Z}/2.$ This finite group, generated by  row and column permutations and transpose,  has order $2(n!)^2$.
\end{theorem}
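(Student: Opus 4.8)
The plan is to show that $G = \mathrm{Aut}(M(n))$ contains the subgroup $H$ generated by row permutations, column permutations, and transpose, that this subgroup is isomorphic to $S_n \wr \mathbb{Z}/2$ of order $2(n!)^2$, and then that $G = H$ by showing every automorphism is determined by its action on the extreme rays of the monoid. The inclusion $H \subseteq G$ is routine: left and right multiplication by a fixed permutation matrix permutes rows or columns, respectively, and these operations are additive and fix $0$; likewise $M \mapsto M^T$ is additive and fixes $0$. That the group they generate is a semidirect product $(S_n \times S_n) \rtimes \mathbb{Z}/2$, hence the wreath product $S_n \wr \mathbb{Z}/2$ of order $2(n!)^2$, follows from the relations $\tau (P_\sigma \cdot M \cdot P_\pi) \tau = P_\pi \cdot M^T \cdot P_\sigma$ together with the fact that row and column permutations commute, and that a pair $(P_\sigma, P_\pi)$ acts trivially only when $\sigma = \pi = \mathrm{id}$ (test on a generic permutation matrix).

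The substantive direction is $G \subseteq H$. First I would invoke Birkhoff--von Neumann: the permutation matrices are exactly the extreme rays of the cone generated by $M(n)$, equivalently the indecomposable elements of the monoid (those not expressible as a sum of two nonzero semi-magic squares). Since any $g \in G$ is an additive bijection fixing $0$, it preserves indecomposability in both directions, so $g$ restricts to a permutation of the set $\mathcal{P}_n$ of $n!$ permutation matrices. Moreover $g$ preserves line sums: $\rho(g\cdot M)$ must equal $\rho(M)$ because $g$ commutes with the diagonal $\mathbb{N}$-action $M \mapsto kM$ and sends indecomposables to indecomposables (all of which have line sum $1$), and any semi-magic square with line sum $\rho$ is a sum of $\rho$ permutation matrices. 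Then the key step is to pin down the induced permutation of $\mathcal{P}_n$: after composing $g$ with a suitable element of $H$, we may assume $g$ fixes the identity matrix $I$. The matrices $P$ with $P + I$ indecomposable-free in a controlled way — more precisely, the transpositions-type permutation matrices $P_{(ij)}$, which are the unique permutation matrices $P \neq I$ such that $P$ and $I$ share exactly $n-2$ common ones — are preserved by $g$, so $g$ induces an automorphism of the Cayley-type graph on $\mathcal{P}_n$ whose edges join permutation matrices differing by a transposition. Automorphisms of this structure, combined with the additive constraint that $g(P_\sigma + P_\pi)$ is determined, force $g$ on $\mathcal{P}_n$ to come from $S_n \times S_n$ acting by row/column permutation (the two factors distinguished by whether $g$ respects the "left" or "right" multiplication structure, which is where transpose enters). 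Finally, an automorphism fixing every permutation matrix fixes every semi-magic square, since each is a sum of permutation matrices and $g$ is additive; hence $g \in H$.

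I expect the main obstacle to be the middle step: showing that the induced permutation of $\mathcal{P}_n$ (after normalizing $g$ to fix $I$) is realized by a row or column permutation rather than some exotic bijection of $S_n$. The cleanest route is to exploit that $g$ must preserve, for each pair $P, Q$ of permutation matrices, the multiset of ways $P + Q$ decomposes as a sum of two permutation matrices — this decomposition count is governed by the cycle structure of $PQ^{-1}$ (the number of $2$-colorings of the cycles of the associated bipartite union), so $g$ preserves conjugacy-class data of $\sigma\pi^{-1}$, and in particular preserves transpositions. Then one uses the classical fact that an automorphism of $S_n$ preserving the class of transpositions, together with the rigidity forced by the monoid's additive structure on sums $P_{(ij)} + P_{(jk)}$, must be realized through the $S_n \times S_n$ action; handling $n = 6$ requires care because of the exceptional outer automorphism of $S_6$, but the extra additive constraints (or a direct appeal to \cite{LTT}, Theorem 2.2, which we are entitled to cite) rule it out. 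Once the action on $\mathcal{P}_n$ is identified, additivity closes the argument immediately.
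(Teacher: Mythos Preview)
The paper does not prove this theorem; it is stated with a citation to \cite{LTT} and no argument is given. So there is no paper proof to compare against---your parenthetical ``direct appeal to \cite{LTT}'' is in fact exactly what the paper does.

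As for your independent sketch: the easy inclusion $H \subseteq G$, the wreath-product structure of $H$, and the reduction of $G \subseteq H$ to the induced permutation of $\mathcal{P}_n$ (via indecomposables and additivity) are all correct and standard. The gap you acknowledge in the middle step is real, however. Your proposed detection of ``sharing exactly $n-2$ common ones'' is not obviously monoid-intrinsic; what \emph{is} intrinsic is the number of ordered decompositions of $P+Q$ (which, by the paper's own Theorem~4.2, equals $2^c$ with $c$ the number of non-trivial cycles of $P^{-1}Q$, so it recovers $c$ but not directly the fixed-point count), and the partial order $M \le N \iff N-M \in M(n)$, from which $g(J)=J$ and preservation of $M(n,1)$ follow. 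These give more leverage than what you wrote, but completing the identification of the action on $\mathcal{P}_n$ with an element of $(S_n \times S_n) \rtimes \mathbb{Z}/2$---including the $n=6$ issue you flag---is precisely the substance of the cited result in \cite{LTT}, and your sketch does not actually supply it.
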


 In particular, if $\sigma, \tau$ are in $S_n$ and $T$ represents the transpose operation, then the group elements and their corresponding action may be uniquely expressed by either
$$R(\sigma)C(\tau)\cdot M = P_\sigma M P_\tau^{-1},\quad \text{or}\quad R(\sigma)C(\tau)T\cdot M = P_\sigma M^T P_\tau^{-1};$$
the commuting elements $R(\sigma)$ and $C(\tau)$ represent row and column permutations, respectively, and non-commutativity is expressed by the relation
$$R(\sigma)T = TC(\sigma).$$

Define  $J$ to be the matrix with all entries equal to 1.  Note that $J$ is in $M(n)$ with $\rho(J) = n$.  The fixed points of the action of $G$ are precisely the multiples of $J.$

Now $M(n)$ admits a partial ordering using entry-wise comparison;  that is, $M\le N$ if $m_{ij} \le n_{ij}$ for all $1\le i, j \le n$.  For $s\ge 0,$ define
$$M(n, s) = \{M\in M(n)\ |\ M \le sJ\}.$$
With the induced partial order,   $M(n, s)$ is a finite graded poset with unique minimum $\hat{0}=0J$ and unique maximum $\hat{1}=sJ$ (for instance, \cite{StE} or \cite{StA}).  The rank function $\rho(M)$ is given by line sum, and $N$ covers $M$ if and only if $N=M+P_\sigma$ for some permutation matrix $P_\sigma$.  
Furthermore, since $M\le N$ implies $g\cdot M\le g\cdot N,$ the action of $G$ preserves $M(n, s).$

This poset is self-dual with involution
$$M' = sJ - M.$$
Now
$$\rho(M') = ns-\rho(M)$$
and, for $g$ in $G$, 
$$(g\cdot M)' = g\cdot(M').$$

Considering $M(n)$ as a subset of $\mathbb{Z}^{n^2}$, for which permutation matrices generate the lattice paths of interest, we denote the number of maximal chains between $\hat{0}$ and $M$ by the path number $v(M)$. We denote the number of elements in the  orbit corresponding to $M$ by $o_M.$ The path number $v(M)$ depends only on the orbit of $M$, as does covering data corresponding to $M.$

\section{Latin rectangles and the  poset $M(n, 1)$}
  
We now consider  the finite graded poset $M(n, 1)$.  These semi-magic squares are both $(0, 1)$-matrices and sums of permutation matrices.  That is, if we write these permutations in single line notation, the entries in columns are distinct when listed.  

\begin{definition}
Suppose $0\le m\le n.$  A $m\times n$ matrix $L$ with entries in $\{1, \dots, n\}$ is called a {\bf Latin rectangle} if each value occurs once in each row and at most once in each column.  If $m=n,$ we call $L$ a {\bf Latin square}.
\end{definition}

Thus we may first realize a path from $\hat{0}$ to $M$ in $M(n, 1)$ as a iterated sum of permutation matrices, and then transcribe this sequence into single line notation to obtain a Latin rectangle; the length of the path is the line sum of $M$, which is also the height of the corresponding Latin rectangle.  If the order of addition is recorded top-down, then every Latin square represents a distinct path from $\hat{0}$ to $J$.  For instance, in the poset $M(3, 1)$, we have

$$
\hspace{10pt}\hat{0}\hspace{30pt}\to\hspace{20pt}
\begin{matrix}
 2 & 3 &  1
\end{matrix}\quad\ \ \  \to\quad\ \ \
\begin{matrix}
2 & 3 &  1\\
1 & 2 &  3
\end{matrix}\quad \ \ \ \to\quad\ \ \
\begin{matrix}
2 & 3 & 1\\
1 & 2 &  3\\
3 & 1 &  2
\end{matrix}
$$

$$
\begin{bmatrix}
0 & 0 &  0\\
0 & 0 &  0\\
0 & 0 &  0
\end{bmatrix}
\quad\to\quad
\begin{bmatrix}
0 & 0 & 1\\
1 & 0 &  0\\
0 & 1 &  0
\end{bmatrix}\quad\to\quad
\begin{bmatrix}
1& 0 &  1\\
1 & 1 &  0\\
0 & 1 &  1
\end{bmatrix}\quad\to\quad
\begin{bmatrix}
1 & 1 & 1\\
1 & 1 &  1\\
1 & 1 &  1
\end{bmatrix}.
$$

Latin squares were named for the characters used in Euler's work.  The number of Latin squares of size $n$ (\cite{Slo}, A002860) remains an open problem, with exact values known up to size 11 at the time of this writing.  
See, for instance, \cite{DK} and \cite{LyM} for general theory, \cite{SW} for a general formula using permanents, and \cite{MKW} for an overview with computational results to size 11.  Our total Latin rectangle counts are verified by formula (1) and Figure 3 in \cite{Sto}; see also \cite{BLy}, \cite{Ge}, and \cite{Rio} for counting formulas for Latin rectangles.

\vspace{5pt}

\section{The second rank and derangements}

Our main goal is to construct the poset diagrams for $M(n, 1)$ for $4\le n \le 6.$  When $n=3,$ the full diagram may be displayed easily, and the diagram for orbits is a chain with four elements; see Figures 1 and 3 in \cite{Do}.  Otherwise our general goal consists of two parts:  determine the orbit data for each rank, and determine the covering data for the orbits. 

Under the group action, the orbits for ranks 0 and 1 are evident, and, by duality, so are the top two ranks.  For rank two, we first consider general sums of  pairs of permutation matrices to obtain path counts.

\begin{definition} We say the pair of distinct permutation matrices $P_{\sigma_1}, P_{\sigma_2}$  is {\bf uniquely summable} if, when
$P_{\sigma_1} + P_{\sigma_2} = P_{\tau_1} + P_{\tau_2},$
we have either $\sigma_1=\tau_1$,\ $\sigma_2=\tau_2$, or $\sigma_1=\tau_2$,\ $\sigma_2=\tau_1$.
\end{definition}

It is immediate that the notion of unique summability is invariant under $G$.  Next compare the following with Corollary 2.1 of \cite{BS}.

\begin{theorem} Suppose $\sigma_1^{-1}\sigma_2$ is a product of $c$ disjoint cycles in $S_n.$
Then  $P_{\sigma_1}, P_{\sigma_2}$ is a uniquely summable pair if and only if $c=1.$ Furthermore, the matrix $P_{\sigma_1}+P_{\sigma_2}$ may be expressed in $2^{c}$ ways as an ordered sum of permutation matrices.
\end{theorem}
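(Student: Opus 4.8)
The plan is to reduce everything to the combinatorics of the permutation $\pi = \sigma_1^{-1}\sigma_2$, using the fact (already exploited implicitly in the excerpt) that left-multiplying both $\sigma_i$ by $\sigma_1^{-1}$ corresponds to an element of $G$ (a row permutation) and hence preserves both unique summability and the number of ordered sum decompositions. So without loss of generality I may assume $\sigma_1 = \mathrm{id}$, $P_{\sigma_1} = I$, and write $\pi = \sigma_2$ with $c$ disjoint cycles (counting fixed points as $1$-cycles, or else restricting attention to the support — I would fix this convention early). The matrix $S = I + P_\pi$ then has entries: a $2$ in position $(i,i)$ whenever $i$ is a fixed point of $\pi$, a $1$ in positions $(i,i)$ and $(i,\pi(i))$ for each non-fixed $i$, and $0$ elsewhere. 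The task is to enumerate the ways to write $S = P_{\tau_1} + P_{\tau_2}$ as an ordered sum.

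First I would establish the decomposition count. Given any expression $S = P_{\tau_1} + P_{\tau_2}$, look at the support of $S$: in each row $i$ the two $1$-entries of $P_{\tau_1}$ and $P_{\tau_2}$ must lie among the nonzero positions of row $i$. If $i$ is a fixed point of $\pi$, both $\tau_1$ and $\tau_2$ must send $i \mapsto i$; this contributes no choice. If $i$ lies on a nontrivial cycle $\gamma = (i_1\, i_2\, \cdots\, i_\ell)$ of $\pi$, then within the $\ell \times \ell$ submatrix indexed by $\gamma$, the support of $S$ is exactly $I_\ell + P_\gamma$, a circulant whose support is a single $2\ell$-cycle as a bipartite graph; its only two perfect matchings are "the identity part" and "the $\gamma$ part." Hence on each nontrivial cycle there are exactly two ways to split, and these choices are independent across the $c'$ nontrivial cycles, giving $2^{c'}$ unordered-with-labels... more precisely $2^{c'}$ ordered decompositions once we remember which summand is $\tau_1$. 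I would then reconcile the bookkeeping so the final count reads $2^{c}$ exactly as stated (handling the convention for fixed points so that it comes out right — this is the one place to be careful, and I'd state the convention explicitly in the theorem's proof).

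For the unique-summability equivalence: if $c = 1$ (a single cycle on all of $\{1,\dots,n\}$, i.e. $\pi$ an $n$-cycle, with the convention chosen so "$c=1$" forces no fixed points), the $2^c = 2$ ordered decompositions are precisely $(P_{\tau_1},P_{\tau_2}) = (I, P_\pi)$ and $(P_\pi, I)$, which is exactly the statement that $P_{\sigma_1}, P_{\sigma_2}$ is uniquely summable. Conversely, if $c \ge 2$ there are at least $2^2 = 4$ ordered decompositions, hence at least two genuinely different unordered ones (swap the split on one cycle but not the other), violating unique summability. Transporting back by the $G$-element $R(\sigma_1^{-1})$ recovers the claim for general $\sigma_1,\sigma_2$ via $G$-invariance of both notions.

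The main obstacle is not conceptual but the careful treatment of fixed points of $\pi$ in the cycle count: I must make sure that "$c$ disjoint cycles" in the statement is read in the convention under which an $n$-cycle gives $c=1$ and correspondingly $2^1 = 2$ decompositions, while a permutation with, say, two $2$-cycles and the rest fixed is read with $c$ counting only the two nontrivial cycles (so $2^c = 4$) — or, if instead all cycles including fixed points are counted, then the formula would need the support restriction. I would resolve this by phrasing the submatrix argument on the support of $\pi$ first, proving $2^{(\text{number of nontrivial cycles})}$, and then observing that adding back fixed points multiplies the count by $1$, so the exponent equals "number of cycles" under the reading where $\pi$ has no fixed points (equivalently $\sigma_1 \ne \sigma_2$ in no coordinate), which is the intended setting since $P_{\sigma_1}$ and $P_{\sigma_2}$ are assumed distinct. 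The bipartite-graph "single $2\ell$-cycle has exactly two perfect matchings" lemma is the clean technical core and should be stated as such.
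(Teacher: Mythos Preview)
Your argument is essentially the paper's: reduce via left translation to $\sigma_1=\mathrm{id}$, observe that $I+P_\pi$ is block-diagonal along the cycle decomposition of $\pi$, and show each nontrivial cycle block admits exactly two ordered splits while fixed-point blocks admit one --- the paper does the two-splits-per-cycle step by tracing values (``$\tau_1(1)$ equals $1$ or $2$, and the line sum of $2$ determines all other values''), which is exactly your ``the bipartite support is a single $2\ell$-cycle with two perfect matchings'' in different clothing. One small correction to your closing remark: distinctness of $P_{\sigma_1}$ and $P_{\sigma_2}$ does \emph{not} imply $\pi=\sigma_1^{-1}\sigma_2$ is fixed-point-free (take $\sigma_1=\mathrm{id}$, $\sigma_2=(12)$ in $S_3$); the paper simply reads ``product of $c$ disjoint cycles'' as excluding fixed points, handling the diagonal $2$'s separately just as you do, so your convention worry dissolves once you adopt that reading rather than appealing to distinctness.
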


\begin{proof} Since left translation preserves pairwise sums of permutation matrices, we may assume $P_{\sigma_1} = I$.  
By conjugation, we may assume $\sigma_2$ is represented as a product of disjoint cycles with first cycle $(1\dots i)$ and fixed points at the end.   Now $M=I+P_{\sigma_2}$ is block diagonal, with blocks corresponding to cycles, followed by diagonal entries of two for the fixed points.   

If we rewrite $M=P_{\tau_1} + P_{\tau_2},$ then $\tau_1(1)$ equals 1 or $2,$ and the line sum of 2 determines all other values for $\tau_1$ on $\{2, \dots, i\}$.   That is, $\tau_1$ is either the identity or $(1\dots i)$ on $\{1, \dots, i\}$.  
Generalizing, we have shown that cycles may exchange and that only products of cycles may exchange. Hence the count follows.
\end{proof}
 Restated in terms of path numbers, we have
 \begin{corollary} Suppose $\sigma^{-1}\tau$ is a product of $c$ disjoint cycles.  Then there are $2^{c}$ paths from $\hat{0}$ to $P_\sigma+P_\tau$ in $M(n)$.
\end{corollary}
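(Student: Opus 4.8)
The plan is to recognize Corollary 4.3 as a direct translation of Theorem 4.2 into the language of maximal chains. First I would observe that $M := P_\sigma + P_\tau$ has every row sum and column sum equal to $2$. Since a covering step in $M(n)$ adds a single permutation matrix and hence raises the line sum by exactly $1$, and the zero matrix $\hat{0}$ has line sum $0$, every maximal chain from $\hat{0}$ to $M$ consists of exactly two covering steps.

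Next I would use the covering description from Section 2. A covering step out of $\hat{0}$ must have the form $\hat{0}\to P_{\rho_1}$ for some permutation $\rho_1$, and the following covering step must have the form $P_{\rho_1}\to P_{\rho_1}+P_{\rho_2} = M$ for some permutation $\rho_2$. Hence a maximal chain $\hat{0}\to N\to M$ is precisely the datum of an ordered pair $(\rho_1,\rho_2)$ of permutations with $P_{\rho_1}+P_{\rho_2} = P_\sigma+P_\tau$; conversely, each such ordered pair yields the chain $\hat{0}\to P_{\rho_1}\to M$, the intermediate term $P_{\rho_1}$ lying below $M$ automatically since $P_{\rho_1}\le P_{\rho_1}+P_{\rho_2} = M$ entrywise. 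This correspondence is a bijection: a length-two chain is determined by its middle element $N = P_{\rho_1}$, which recovers $\rho_1$, and then $P_{\rho_2} = M - P_{\rho_1}$ is forced, which recovers $\rho_2$.

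Finally I would invoke Theorem 4.2 with $\sigma_1 = \sigma$ and $\sigma_2 = \tau$: since $\sigma^{-1}\tau$ is a product of $c$ disjoint cycles, $M = P_\sigma+P_\tau$ admits exactly $2^c$ expressions as an ordered sum of permutation matrices. Combined with the bijection of the previous paragraph, this gives exactly $2^c$ maximal chains from $\hat{0}$ to $M$, that is, $v(P_\sigma+P_\tau) = 2^c$.

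I do not expect any genuine obstacle here, as the entire combinatorial content already resides in Theorem 4.2 and its proof; this corollary is a matter of repackaging. The only steps warranting explicit verification are that every maximal chain in question has length exactly two (so nothing is hidden between the endpoints) and that the passage from ordered sums to chains is a true bijection rather than merely a surjection — both immediate from the covering rule "add a permutation matrix" together with the fact that a length-two chain is pinned down by its middle element.
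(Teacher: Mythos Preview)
Your proposal is correct and takes essentially the same approach as the paper, which simply presents the corollary as a restatement of Theorem~4.2 in terms of path numbers without further argument. You have merely made explicit the bijection between length-two chains $\hat{0}\to P_{\rho_1}\to M$ and ordered decompositions $M=P_{\rho_1}+P_{\rho_2}$ that the paper leaves implicit.
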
 
 
 Specializing these sums to respect the maximum property, we now characterize the orbit types in rank two.
 
 \begin{theorem}
 The orbits in the second rank of  $M(n, 1)$ are in one-one correspondence with the conjugacy classes of derangements of $\{1, \dots, n\}$.
\end{theorem}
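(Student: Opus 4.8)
The plan is to bring every square in the second rank to a normal form $I+P_\delta$ with $\delta$ a derangement, and then to show that the $G$-orbit of $I+P_\delta$ remembers exactly the cycle type of $\delta$; since the conjugacy classes of derangements of $\{1,\dots,n\}$ are precisely the partitions of $n$ with every part at least $2$, this will produce the asserted bijection. For the normal form: a square $M$ in $M(n,1)$ with $\rho(M)=2$ is a $(0,1)$-matrix which, by the Birkhoff--von Neumann theorem, can be written $M=P_{\sigma_1}+P_{\sigma_2}$ with $P_{\sigma_1},P_{\sigma_2}$ permutation matrices; because $M$ has entries in $\{0,1\}$, the supports of $P_{\sigma_1}$ and $P_{\sigma_2}$ are disjoint, i.e. $\sigma_1(i)\neq\sigma_2(i)$ for all $i$, which says exactly that $\delta:=\sigma_1^{-1}\sigma_2$ is a derangement; conversely $I+P_\delta$ is such a square for any derangement $\delta$. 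Acting by the row permutation $R(\sigma_1^{-1})\in G$ sends $M$ to $I+P_\delta$, so every rank-$2$ orbit contains a square of this form, and it remains to prove that $I+P_\delta$ and $I+P_{\delta'}$ lie in one $G$-orbit if and only if $\delta$ and $\delta'$ are conjugate.

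One direction is immediate: if $\delta'=\pi\delta\pi^{-1}$, the simultaneous permutation of rows and columns by $\pi$ is an element of $G$ that carries $I+P_\delta$ to $I+P_{\delta'}$. In particular each conjugacy class of derangements is realized by some rank-$2$ square, so the assignment orbit $\mapsto$ conjugacy class of $\delta$ will be onto.

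The substance is the converse, namely that the orbit of $I+P_\delta$ determines the cycle type of $\delta$. Here I would pass to the bipartite picture: to a $(0,1)$-matrix $M$ of line sum $2$ attach the graph $\Gamma(M)$ with one vertex class indexing the rows and another indexing the columns, joining $r_i$ to $c_j$ when $m_{ij}=1$. Then $\Gamma(M)$ is $2$-regular, hence a disjoint union of cycles, all of even length. Row and column permutations act on $\Gamma(M)$ by isomorphisms preserving the two vertex classes, and the transpose by an isomorphism interchanging them, so the multiset of cycle lengths of $\Gamma(M)$ is an invariant of the $G$-orbit. A short computation shows that each cycle of $\delta$ of length $\ell$ contributes exactly one cycle of length $2\ell$ to $\Gamma(I+P_\delta)$, so this multiset is $\{\,2\ell:\ell\text{ a cycle length of }\delta\,\}$ and thus determines the cycle type of $\delta$. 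This gives the converse, and together with the previous paragraph the correspondence is a bijection. (For a self-contained injectivity argument one also uses that any two even cycles of equal length admit an isomorphism taking row vertices to row vertices, so equal cycle-length multisets force $I+P_\delta$ and $I+P_{\delta'}$ into a common $G$-orbit.)

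The main obstacle is exactly this last step: verifying that the cycle type is an invariant of the orbit rather than an artifact of the chosen decomposition $M=P_{\sigma_1}+P_{\sigma_2}$. The bipartite reformulation settles it cleanly; alternatively one can stay in the language of permutation matrices and appeal to the earlier theorem, which describes all $2^{c}$ ordered decompositions of $M$ as those obtained from $\delta$ by splitting off and reversing subsets of its cycles --- operations that do not alter the cycle type.
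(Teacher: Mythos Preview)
Your argument is correct, and the overall architecture---reduce to a normal form $I+P_\delta$, then show the orbit determines the cycle type of $\delta$---matches the paper. The difference lies in how you handle the injectivity step. The paper stays inside the symmetric group: after noting that transpose only replaces $\delta$ by $\delta^{-1}$ and that simultaneous conjugation can absorb the column permutation, it reduces to analyzing an equation $P_\sigma(I+P_{\tau_1})=I+P_{\tau_2}$, then reads off that the fixed-point sets of $\sigma$ and $\sigma\tau_1$ partition $\{1,\dots,n\}$ and that $\tau_2$ is the disjoint product of the corresponding cycles, forcing $\tau_1$ and $\tau_2$ to share cycle type. Your route is instead graph-theoretic: you attach to $M$ the $2$-regular bipartite graph on rows and columns, observe that its multiset of (even) cycle lengths is manifestly invariant under row permutations, column permutations, and transpose, and then compute that a $\delta$-cycle of length $\ell$ yields a bipartite cycle of length $2\ell$. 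This buys you a one-line invariance proof with no case analysis, at the cost of introducing an auxiliary object; the paper's approach avoids any extra structure but requires the short fixed-point computation. Your closing remark---that one can alternatively invoke the earlier $2^{c}$-decompositions theorem to see that all decompositions of $M$ differ only by reversing subsets of cycles---is also valid and is in the same spirit as the paper's permutation-level reasoning.
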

\begin{proof}
Recall that $\sigma$ is a derangement if it has no fixed points.  If $M = P_{\tau_1} + P_{\tau_2}$ is in $M(n, 1)$, left-multiplying by $P_{\tau_1^{-1}}$ yields $I + P_\sigma$ for some $\sigma$; the derangement property of $\sigma$ follows since the maximum entry is still one. 

Next, conjugations by elements in $S_n$ fix the identity and act transitively on permutations with the same cycle structure.  Thus each orbit in rank two corresponds to at least one derangement class. Furthermore, $\sigma$ and $\sigma^{-1}$ have the same cycle structure, so orbits can be determined without using transpose.  Finally, since we only need to calculate up to conjugacy, it is enough to consider only left multiplication to show the derangement class is unique.

Suppose $P_\sigma\cdot(I+P_{\tau_1}) = I+P_{\tau_2}$, where the $\tau_i$ are derangements.  Since $\tau_2$ is a derangement, the fixed points of $\sigma$ and $\sigma\tau_1$ partition $\{1, \dots, n\}$, and $\tau_2$ is the disjoint product of the cycles in $\sigma$ and $\sigma\tau_1.$  When the partition is trivial, we have $\sigma=e$ or $\sigma=\tau_1^{-1}$, and the $\tau_i$ belong to the same class.

If each element has at least one fixed point, then we may relabel the indices by conjugating, so that the fixed points of $\sigma\tau_1$ are $\{1, \dots, k\}.$ Since $\sigma=\tau_2$ on $\{1,\dots, k\},$  $\tau_1=\tau_2^{-1}$ there by the equation. Thus the corresponding cycles have the same structure.  Likewise, on the other set of fixed points, we have $\tau_1=\tau_2.$\end{proof}

Next, the size of an orbit may be computed using the Orbit-Stabilizer Theorem. The following theorem and corollaries handle all cases needed here.

\begin{theorem}  Suppose $n>2,$ and let $\sigma = (1\dots n).$ The stabilizer of $M=I+P_{\sigma}$ is isomorphic to $D_{4n},$ the dihedral group with $4n$ elements.  The corresponding orbit has $o_M =  (n-1)!n!/2$ elements.
\end{theorem}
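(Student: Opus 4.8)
The plan is to compute the stabilizer of $M=I+P_\sigma$ directly, where $\sigma=(1\dots n)$, by working out which group elements $R(\alpha)C(\beta)$ and $R(\alpha)C(\beta)T$ fix $M$. First I would handle the non-transpose part: $R(\alpha)C(\beta)\cdot M = P_\alpha(I+P_\sigma)P_\beta^{-1} = P_{\alpha\beta^{-1}} + P_{\alpha\sigma\beta^{-1}}$. For this to equal $I+P_\sigma$, Theorem 4.5 (unique summability, since $\sigma$ is a single $n$-cycle so $c=1$) forces either $\alpha\beta^{-1}=e$ and $\alpha\sigma\beta^{-1}=\sigma$, giving $\alpha=\beta$ with $\alpha\sigma=\sigma\alpha$; or $\alpha\beta^{-1}=\sigma$ and $\alpha\sigma\beta^{-1}=e$, giving $\beta=\sigma^{-1}\alpha$ together with $\alpha\sigma=\alpha^{-1}$... more carefully, $\alpha\sigma\alpha^{-1}\sigma = e$, i.e. $\alpha\sigma\alpha^{-1}=\sigma^{-1}$. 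The first alternative says $\alpha$ centralizes $\sigma$; since $\sigma$ is an $n$-cycle, its centralizer in $S_n$ is the cyclic group $\langle\sigma\rangle$ of order $n$. The second alternative says $\alpha$ inverts $\sigma$ by conjugation; the set of such $\alpha$ is a coset of the centralizer, also of size $n$ (e.g. $\alpha(i)=2-i \bmod n$ works). So the non-transpose stabilizer has order $2n$ and is generated by $x = R(\sigma)C(\sigma)$ (order $n$) and one reflection $w = R(\alpha)C(\sigma^{-1}\alpha)$ (order $2$) with the dihedral relation, i.e. it is a copy of $D_{2n}$.

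Next I would incorporate the transpose. Since $M^T = (I+P_\sigma)^T = I + P_{\sigma^{-1}}$ and $\sigma^{-1}$ is conjugate to $\sigma$ (both $n$-cycles), the element $T$ composed with suitable row/column permutations will fix $M$; concretely one checks $R(\gamma)C(\delta)T\cdot M = P_\gamma M^T P_\delta^{-1} = P_{\gamma\delta^{-1}} + P_{\gamma\sigma^{-1}\delta^{-1}}$, and unique summability again pins down $\gamma,\delta$ to two families each of size $n$, so the transpose coset contributes another $2n$ elements. Thus $|\mathrm{Stab}(M)| = 4n$. To identify the group up to isomorphism as $D_{4n}$, I would exhibit an element $z$ of order $2n$ — a natural candidate is $R(\sigma)C(\sigma)$ times an appropriate transpose element, or just note that the subgroup is generated by an order-$2n$ rotation and an order-$2$ reflection satisfying $yzy^{-1}=z^{-1}$; verifying one explicit such pair, using the relation $R(\pi)T = TC(\pi)$ from Section 2 to manage the non-commutativity, gives the presentation of $D_{4n}$. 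One must check $n>2$ is used precisely to guarantee $\sigma\ne\sigma^{-1}$ (so the two size-$n$ families in each case are genuinely the structure of a dihedral, not degenerate) and that $\sigma$ is a single cycle so Theorem 4.5 applies cleanly.

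Finally the orbit size follows from Orbit–Stabilizer: $o_M = |G|/|\mathrm{Stab}(M)| = 2(n!)^2/(4n) = (n!)^2/(2n) = (n-1)!\,n!/2$. I expect the main obstacle to be the bookkeeping in the transpose coset: correctly composing $T$ with row and column permutations using $R(\pi)T=TC(\pi)$, determining exactly which pairs $(\gamma,\delta)$ solve the equation, and then checking that the resulting $4n$ elements close up into $D_{4n}$ rather than, say, $\mathbb{Z}/2\times D_{2n}$ — this requires pinning down the order of a well-chosen generator and confirming the single defining relation, which is where a small explicit computation (perhaps with the permutation $i\mapsto -i$ or $i\mapsto 1-i$ mod $n$) will be needed.
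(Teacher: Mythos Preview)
Your proposal is correct and takes a genuinely different route from the paper's proof. The paper argues in two steps: first it exhibits explicit stabilizing elements---the powers $R(\sigma^k)C(\sigma^k)$ from the centralizer of $\sigma$, together with the counter-diagonal reflection $R(\tau)C(\tau)T$ with $\tau=(1\,n)(2\ n{-}1)\cdots$, noting that these sit inside the cyclic group of order $2n$ generated by $R(n\cdots 1)T$---and then bounds the stabilizer from above by a direct combinatorial argument: there are $2n$ places a stabilizing symmetry can send the unique $1$ in the upper-right corner of $M$, and once that $1$ is fixed the line-sum-$2$ constraint leaves only two global possibilities. You instead leverage Theorem~4.2 (unique summability for a single $n$-cycle; note you labeled it 4.5, but 4.5 is the statement you are proving) to turn the stabilizer condition $g\cdot M=M$ into permutation equations that are solved exactly by the centralizer and the inverting coset of $\sigma$, once without and once with the transpose, for $2n+2n=4n$ elements in total. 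Your approach recycles prior machinery and is arguably cleaner for the count; the paper's approach gives the order-$2n$ generator and the dihedral presentation more directly, which is precisely the piece you flag as the remaining obstacle (ruling out $\mathbb{Z}/2\times D_{2n}$). The paper settles that by explicitly naming $R(n\cdots 1)T$ as an element of order $2n$; you should do the same once you reach that point.
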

\begin{proof}
As a cycle, $\sigma$  is centralized by precisely its own powers in $S_n$, and, as a permutation matrix, further stabilized by 
$R(\tau)C(\tau)T$ with $\tau = (1n)(2\ n-1)\dots.$  
This stabilizer element corresponds to reflection across the counter-diagonal; after transposing, we rotate the matrix by 180 degrees.  The cyclic subgroup is in fact a subgroup of the cyclic subgroup of order $2n$ generated by $R(n\dots 1)T,$ and these two  elements with factors of $T$ generate a stabilizing subgroup isomorphic to $D_{4n}$.

The stabilizer contains no other elements.  Consider the one in the upper-right corner of $I+P_\sigma$.  Under a stabilizing element, there are $2n$ choices to replace it.  If we suppose this one is fixed, then the elements in the same row and column are either fixed or interchange. Since the line sum is 2, the positions of the remaining ones are fixed by this choice, so the symmetry is the identity or counter-transpose.  Thus the order of the stabilizer is at most $4n$.
\end{proof}

These symmetries may also be modeled by the isometries of a bounder right cylinder over a regular $n$-gon. Effectively, every symmetry above is composed of at most three operations: translation along the main diagonal, interchange of diagonals of ones, and counter-transpose.  For a general derangement, each cycle of size $k$ generates a stabilizing subgroup of itself of size $k$; these correspond to translations along the main diagonal within a block. 

\begin{corollary}
Suppose the derangement $\sigma$ is a product of two disjoint cycles $\sigma_1$ and $\sigma_2$.  Let $M=I + P_\sigma.$

(a)  If $\sigma_1$ and $\sigma_2$ have lengths $n_1 > n_2 \ge 2$, then the stabilizer of $M$ has order $8n_1n_2.$   The corresponding orbit has $o_M = (n!)^2/4n_1n_2$ elements. 

(b) If $n> 4$ and $n_1=n_2$, the stabilizer of $M$ has order $4n^2$, and the corresponding orbit has $o_M= (n-1)!^2/2$ elements.
\end{corollary}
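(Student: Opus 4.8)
The plan is to follow the template set by the preceding theorem on the single-cycle case, computing the stabilizer of $M = I + P_\sigma$ directly and then invoking the Orbit-Stabilizer theorem. First I would record the basic structural observation: after conjugating, $P_\sigma$ is block diagonal with two cyclic blocks of sizes $n_1$ and $n_2$ along the diagonal, so $M = I + P_\sigma$ is block diagonal with blocks $M_1 = I_{n_1} + C_{n_1}$ and $M_2 = I_{n_2} + C_{n_2}$ (where $C_k$ is the $k$-cycle permutation matrix). A stabilizing element of $G = S_n \wr \mathbb{Z}/2$ must permute the nonzero entries of $M$ among themselves; since each diagonal block is a connected "cylinder" pattern of ones disjoint from the other, the key first step is to argue that any symmetry either preserves each block setwise or swaps the two blocks, and the latter is possible exactly when $n_1 = n_2$.

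For part (a), with $n_1 > n_2$, no symmetry can swap the blocks, so the stabilizer embeds into (symmetries of block $1$) $\times$ (symmetries of block $2$), each factor being the $D_{4n_i}$-type group from Theorem 4.6 — generated by diagonal translation within the block, interchange of the two diagonals of ones, and counter-transpose restricted to that block. But these are not independent: the transpose operation $T$ is global, so one cannot counter-transpose one block without counter-transposing the other. The honest count is therefore: the cyclic translations in each block ($n_1 n_2$ choices) times the diagonal-interchange in each block ($2 \times 2 = 4$ choices) times a single global factor of $T$ combined with the counter-transpose permutation ($2$ choices), giving $8 n_1 n_2$. I would verify this order is tight by the same "track the corner entry" argument as in Theorem 4.6: fix one distinguished $1$ in block $1$; once its image is chosen (at most $8n_1 n_2 / (8n_2) \cdot \dots$ — more cleanly, once we know the action on block $1$ up to its $4n_1$ symmetries and whether $T$ is applied, the action on block $2$ is forced up to its $4n_2/2 = 2n_2$ translations-and-interchanges since the transpose is already committed). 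Then $o_M = |G|/|\mathrm{Stab}| = 2(n!)^2 / 8n_1 n_2 = (n!)^2 / 4n_1 n_2$.

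For part (b), with $n_1 = n_2 = n/2$ and $n > 4$, the stabilizer picks up an extra involution: the element of $G$ that swaps the two equal-sized blocks (a specific $R(\pi)C(\pi)$ with $\pi$ the block-transposition, no $T$ needed). This block-swap normalizes the within-block symmetry group, so the stabilizer is an extension of the order-$8n_1 n_2 = 8(n/2)^2 = 2n^2$ group from part (a) by this $\mathbb{Z}/2$, giving order $4n^2$, hence $o_M = 2(n!)^2 / 4n^2 = (n-1)!^2/2$. The hypothesis $n > 4$ is needed to rule out the small coincidence at $n = 4$ (two $2$-cycles, where $M = I + P_{(12)(34)}$ has extra symmetry because the $2$-cycle blocks $\begin{smallmatrix}1&1\\1&1\end{smallmatrix}$ are themselves fixed by counter-transpose, collapsing the $4n_i$ per-block group). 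I expect the main obstacle to be the bookkeeping in part (a): carefully justifying that the global nature of transpose forces the "$8$" rather than "$16$" — i.e. that the per-block counter-transposes are not independent — and presenting the "track a distinguished corner $1$" argument cleanly enough that the upper bound on the stabilizer order is manifestly correct without a brute-force case check. The block-swap extension in (b) and the need for $n>4$ are comparatively routine once (a) is in hand.
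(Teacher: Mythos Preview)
Your plan is essentially the paper's: block-diagonalize, use the single-cycle theorem on each block, impose the constraint that the transpose factor is global (cutting $16n_1n_2$ to $8n_1n_2$), and for equal block sizes adjoin a block-swapping involution. Two small corrections are worth making. First, the paper treats $n_2=2$ as a separate sub-case of (a) because Theorem~4.5 assumes cycle length greater than $2$; your uniform count still lands on $8n_1n_2$, but you should verify directly that the all-ones $2\times 2$ block has stabilizer of order $8=4n_2$ rather than invoke that theorem. Second, your explanation of the hypothesis $n>4$ in (b) is off: there is no ``collapse'' of the per-block group at $n_i=2$, and in fact your argument and formula give the correct stabilizer order $64$ and orbit size $18$ when $n=4$. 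The restriction is present only because two $2$-cycles form an involution, which the paper defers to the next corollary (and because the paper's part (b) proof cites the $n_2>2$ branch of (a)). As a side note, for the block-swap you use $R(\pi)C(\pi)$ with no $T$, while the paper uses counter-transpose, which does carry $T$; both are valid coset representatives of the index-$2$ extension.
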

\begin{proof} We may assume the cycles have consecutive indexing.  For part (a), first suppose $n_1=2.$ Since the block of size 2 is preserved by transpose, all stabilizing symmetries of the larger block occur.  In turn, this subgroup normalizes the four element subgroup generated by transpositions on the block of size 2. 

If $n_1 > 2$, there are at most $16n_1n_2$ symmetries by Theorem 4.5. Furthermore, the actions on each block either both use transpose or neither do. From the proof of Theorem 4.5, there are $4n_1n_2$ elements of each types. Since this subgroup is a proper subgroup of the direct product, the result follows.

For part (b),  a similar argument holds, but now the subgroup is further normalized by counter-transpose, which switches blocks.
\end{proof}

Next, we have
\begin{corollary} Suppose $n=2m$ and the derangement $\sigma$ is an involution. Then the stabilizer of $M=I+P_\sigma$ has order $m! 2^{n+1},$ and the corresponding orbit has $(n!)^2/m!2^{n}$ elements.
\end{corollary}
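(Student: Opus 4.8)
The plan is to follow the template of Theorem 4.5 and Corollary 4.6: compute the stabilizer of $M = I + P_\sigma$ explicitly and then divide $|G| = 2(n!)^2$ by its order. A fixed-point-free involution of $\{1,\dots,n\}$ with $n = 2m$ is a product of $m$ disjoint transpositions, and since conjugation by $S_n$ is transitive on such permutations and carries stabilizers to isomorphic ones, I may take $\sigma = (1\,2)(3\,4)\cdots(2m-1\;2m)$. Then $M$ is block diagonal with $m$ identical blocks, each the $2\times 2$ all-ones matrix; write $B_1 = \{1,2\},\,B_2 = \{3,4\},\,\dots,\,B_m$ for the index blocks. Since $M$ is symmetric, $T$ itself stabilizes $M$, which will account for the final factor of $2$.

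First I would identify the row--column-type stabilizers, i.e.\ the pairs $(\alpha,\beta)$ with $P_\alpha M P_\beta^{-1} = M$. As $M_{ij} = 1$ precisely when $i$ and $j$ lie in a common $B_k$, comparing the supports of the two sides forces both $\alpha$ and $\beta$ to carry blocks to blocks; a short computation with the fibers of the induced maps on $\{1,\dots,m\}$ then shows $\alpha$ and $\beta$ must induce the \emph{same} permutation $\pi\in S_m$ of the blocks. Conversely, every $\pi$, together with an arbitrary choice --- made independently for $\alpha$, for $\beta$, and over each of the $m$ blocks --- of which element of $B_k$ is sent to which element of $B_{\pi(k)}$, produces a stabilizing element. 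This yields exactly $m!\cdot 2^m\cdot 2^m = m!\,2^{n}$ elements of row--column type.

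Since $M = M^T$, one has $R(\alpha)C(\beta)T\cdot M = P_\alpha M^T P_\beta^{-1} = P_\alpha M P_\beta^{-1}$, so the transpose-type element $R(\alpha)C(\beta)T$ stabilizes $M$ under exactly the same conditions on $(\alpha,\beta)$. By the uniqueness of the decomposition of a group element as $R(\alpha)C(\beta)$ or $R(\alpha)C(\beta)T$, these transpose-type stabilizers are disjoint from, and in bijection with, the row--column-type ones, so the stabilizer has order $2\cdot m!\,2^{n} = m!\,2^{n+1}$. (If desired, this group is $\big((\mathbb{Z}/2)^2\wr S_m\big)\rtimes\mathbb{Z}/2$, where the outer $\mathbb{Z}/2$ coming from $T$ interchanges, inside each block, the ``row'' and ``column'' copies of $\mathbb{Z}/2$.) Finally, the Orbit--Stabilizer Theorem together with Theorem 2.1 gives
$$o_M = \frac{2(n!)^2}{m!\,2^{n+1}} = \frac{(n!)^2}{m!\,2^{n}}.$$

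The step I expect to be the obstacle is the middle one: verifying that a row--column symmetry must permute the blocks, that it must do so identically on rows and on columns, and that nothing survives beyond a per-block reorientation --- this is the analogue of the ``the stabilizer contains no other elements'' argument of Theorem 4.5. An alternative, in the spirit of Corollary 4.6, is to build up from the two-block case and then append the $S_m$ permuting the $m$ equal blocks and the global transpose; with either route, the care is in the bookkeeping that produces the factor $2^{n+1}$ rather than $2^{n}$ or $2^{n+2}$.
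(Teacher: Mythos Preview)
Your argument is correct and follows essentially the same route as the paper: normalize $\sigma$ to have consecutively indexed $2$-cycles so that $M$ is block diagonal with $m$ copies of the $2\times 2$ all-ones block, then count the within-block $(\mathbb{Z}/2\times\mathbb{Z}/2)^m$ freedom, the $S_m$ that permutes blocks, and the global transpose. The paper's proof simply exhibits these generating pieces and multiplies, whereas you additionally supply the upper-bound step---arguing from the support of $M$ that any row--column symmetry must permute the blocks and induce the \emph{same} block permutation on rows and columns---which the paper leaves implicit in the phrase ``as before.'' Your alternative suggestion at the end (build up from the two-block case and append $S_m$ and $T$) is in fact exactly how the paper presents it.
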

\begin{proof}
As before, we assume consecutive indexing of cycles in $\sigma.$  The subgroup that preserves blocks has a $(\mathbb{Z}/2\times \mathbb{Z}/2)^m$ subgroup generated by transpositions as row and column switches, which is further normalized by $T$.  It is then straightforward to find a subgroup isomorphic to $S_m$ that permutes the blocks of $M$. For instance, a transposition that exchanges blocks $k$ and $k+1$ on the main diagonal is given by $R(\tau)C(\tau)$, where $\tau=(2k-1\ 2k+1)(2k\ 2k+2).$
\end{proof}

Finally, we give the general formula for the path numbers in rank $n-1$ and $n$, assuming all path numbers in rank $n-2$ are known.  Implicit in path counting for higher ranks is the use of an order-raising operator (or ``up" operator) for graded posets (for instance,  \cite{StA}); generalizing Pascal's identity, the path number at a given element  $M$ is the sum of the path numbers of all elements covered by $M$.

\begin{proposition}
Suppose $M = J-I$ in $M(n, 1),$ and  let $M_\sigma = J-I-P_{\sigma}$, where $\{\sigma\}$ is a set of representatives for each class of derangements in $S_n.$ Then the path number for $M$ is given by
$$v(M) = \sum\limits_\sigma\ c_\sigma\cdot v(M_\sigma),$$
where $c_\sigma$ is the number of elements in the conjugacy class for $\sigma.$  Additionally, $$v(J)=n!\cdot v(J-I),$$
which also equals the number of Latin squares of size $n.$
\end{proposition}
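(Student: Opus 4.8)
The plan is to use the order-raising (``up'') operator identity for graded posets recalled just above the statement: for any element $N$ of $M(n,1)$, the path number $v(N)$ equals the sum of the path numbers $v(N')$ over all $N'$ covered by $N$. Combined with the covering rule from Section 2 (that $N'$ is covered by $N$ if and only if $N' = N - P_\tau \ge \hat 0$ for some permutation matrix $P_\tau$) and the fact, also recorded in Section 2, that $v$ is constant on $G$-orbits, the whole statement reduces to identifying the elements covered by $M = J - I$ and by $J$ and then sorting them into orbits.

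First I would identify what lies below $M = J - I$. Its entries are $0$ on the diagonal and $1$ off it, so $J - I - P_\tau \ge \hat 0$ holds exactly when $P_\tau$ is supported off the diagonal, that is, when $\tau$ is a derangement; moreover $P_\tau \mapsto J - I - P_\tau$ is injective, so the elements covered by $M$ are in bijection with the derangements of $\{1,\dots,n\}$. The up-operator identity then reads $v(M) = \sum_\sigma v(J - I - P_\sigma)$, summed over all derangements $\sigma$. Next I would collapse this sum by conjugacy class: if $\sigma' = \pi\sigma\pi^{-1}$ in $S_n$, then $P_{\sigma'} = P_\pi P_\sigma P_\pi^{-1}$, and since $P_\pi J = J P_\pi = J$ and $P_\pi I P_\pi^{-1} = I$ we get $J - I - P_{\sigma'} = P_\pi(J - I - P_\sigma)P_\pi^{-1} = R(\pi)C(\pi)\cdot M_\sigma$. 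Hence conjugate derangements give $G$-equivalent covered elements, $v(M_\sigma)$ depends only on the class of $\sigma$, and grouping the $c_\sigma$ elements of each class over a set of class representatives yields $v(M) = \sum_\sigma c_\sigma\, v(M_\sigma)$.

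For $v(J)$ the argument is the same but simpler: every entry of $J = \hat 1$ equals $1$, so $J - P_\sigma \ge \hat 0$ for every $\sigma \in S_n$, and the elements covered by $J$ are exactly the $n!$ distinct matrices $J - P_\sigma$. Using $P_\sigma J = J$ we have $J - P_\sigma = P_\sigma(J - I) = R(\sigma)\cdot(J - I)$, so all of these lie in the single orbit of $J - I$, and the up-operator identity gives $v(J) = n!\,v(J-I)$. Finally, $v(J)$ counts the maximal chains from $\hat 0$ to $\hat 1 = J$ in $M(n,1)$, and by the top-down encoding of a chain as a list of single-line permutations described in Section 3 these chains are in bijection with the Latin squares of size $n$; hence $v(J)$ is the number of such Latin squares.

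I do not expect a genuine obstacle here; the content is bookkeeping about which permutation matrices may be subtracted and the translation of $S_n$-conjugacy into the $G$-action. The one point that needs care is the claim that a conjugacy class of derangements contributes exactly $c_\sigma$ \emph{distinct} covered elements with no coincidences, which is precisely the injectivity of $P_\tau \mapsto J - I - P_\tau$; and, for the closing sentence, one should invoke the maximal-chain/Latin-square correspondence of Section 3 with the correct orientation, so that ``number of maximal chains $=v(J)$'' is applied rather than its dual.
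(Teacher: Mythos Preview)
Your proposal is correct and follows essentially the same approach as the paper: identify the elements covered by $J-I$ (respectively $J$) as $J-I-P_\tau$ for $\tau$ a derangement (respectively $J-P_\sigma$ for all $\sigma$), then use $G$-invariance of $v$ to collapse the up-operator sum by orbit. The paper's own proof says exactly this in two sentences (``only contributions \dots\ occur from derangements'' and ``each permutation matrix contributes exactly once''), invoking ``homogeneity'' where you spell out the explicit conjugation $R(\pi)C(\pi)$ and the translation $R(\sigma)$; your version is simply a more detailed rendering of the same argument.
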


\begin{proof}  Since there are non-nonzero diagonal elements,  the only contributions to the path number of $M$ occur from derangements, and each element of a derangement class contributes the same path number by homogeneity.  Likewise, the second statement follows since each permutation matrix contributes exactly once to $v(J).$
\end{proof}

\section{The poset $M(4, 1)$}

\begin{figure}[ht]
{\tiny
 \begin{tikzpicture}
   \node (e) at (3, 4) {$J$};
  \node (d) at (3,  3) {$J-I$};
  \node (c1) at (4, 2) {$I+P_{(12)(34)}$};
  \node (c) at (2, 2) {$I+P_{(1234)}$};
  \node (b) at (3, 1)  {${P_\sigma}$};
  \node (a) at (3,0) {$0$};
  \draw  (a) edge (b) (b) edge (c) (c) edge (d)  (d) edge (e) (b) edge (c1) (c1) edge (d);
\end{tikzpicture}
}\qquad
{ \tiny
\begin{tikzpicture}
   \node (e) at (3, 4) {${1,576}$};
  \node (d) at (3, 3) {${24, 24}$};
  \node (c1) at (4, 2) {${18, 4}$};
  \node (c) at (2, 2) {${72, 2}$};
  \node (b) at (3,1)  {${24, 1}$};
  \node (a) at (3,0) {${1,1}$};
  \draw  (a) edge (b) (b) edge (c) (c) edge (d)  (d) edge (e) (b) edge (c1) (c1) edge (d);
\end{tikzpicture}
}
\caption{Poset diagrams for orbits in $M(4, 1)$}
\end{figure}
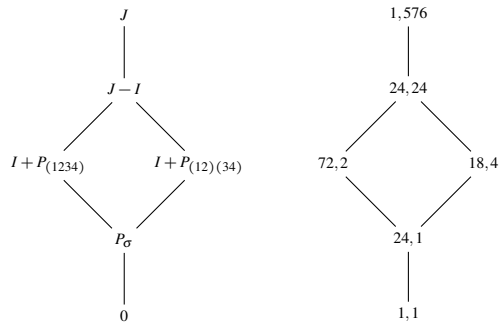

Consider the case when $n=4,$ for which Figure 1 gives the poset diagrams of orbits.  The numbers on the right give the corresponding orbit size and path number.  In this case, all of these numbers are addressed in Section 4.  We confirm that $v(J)= 576$, the number of Latin squares of size 4.

Although the path number for rank 3 follows from Proposition 4.8  with 
$$c_{(1234)}=6,\quad c_{(12)(34)}=3,$$ the entirety of the underlying process to be adapted may be seen with maximum clarity here.  We fix the element $M=J-P_{(13)(24)}$ and consider all elements covered by it.  These elements correspond to derangements of type $(abcd)$ and $(ab)(cd)$, with path numbers 2 and 4, respectively.  Once enumerated, we add the path numbers.

First we note how to extract a permutation from $M$.  For $M$ it is possible to subtract the permutation $\sigma = (12)(34)$, given in single line notation $2134.$  The extraction process looks like the following:
$$\begin{bmatrix} 1 & 1 & 0 & 1\\ 1 & 1 & 1 & 0\\ 0 & 1 & 1 & 1\\ 1 & 0 & 1 & 1\end{bmatrix}\ \ \to\ \  
\begin{matrix}
1 & 2 &  \fbox{3} & \fbox{4}\\
\fbox{2} & 3 &  4 & 1\\
4 & \fbox{1} & 2 & 3\\
\end{matrix}\ \ \to\ \  
\begin{matrix}
1 & 2 &  4 & 3 \\
4 & 3 &  2 & 1 \\
\end{matrix}\ \ \to\ \ 
\begin{matrix}
1 & 2 &  3 & 4\\
4 & 3 &  1 & 2\\
\end{matrix}
$$
The second entry, the Latin rectangle for $M$,  records the row positions of the ones in the corresponding  column, arranged to reflect the Latin rectangle property. The third entry is the Latin rectangle with the string $2134$ removed, each digit corresponding to the same column. Finally, we rearrange columns to obtain the string $1234$ in the first row, giving a derangement in the second row. In any case, the covered element is $I  + P_{(1423)}.$  

We enumerate all such permutations with a tree, in which each level corresponds to a column of the Latin rectangle and each branch corresponds to a permutation of $\{1, 2, 3, 4\}$ to be extracted.  The tree for $M$ is given in Figure 2, with $M$ covering 6 elements with path number 2 and 3 elements with path number 4. Thus the path number of $M$ is 24.

Finally, one verifies the poset convolution formula for orbits in this case (\cite{Do}, Section 6):
$$v(J)\ \ =\ \ \sum\limits_{\rho(M)=k} o_M \cdot v(M)\cdot v(J-M)\ \ =\ \  1 \cdot 24\cdot 24\  =\  72\cdot 2\cdot 2 + 18 \cdot 4 \cdot 4\  =\  576.$$
Here $0\le k \le 4$ is fixed,  $M$ ranges over a set of representatives for each orbit in rank $k$, and $o_M$ denotes the number of elements in the orbit for $M$.

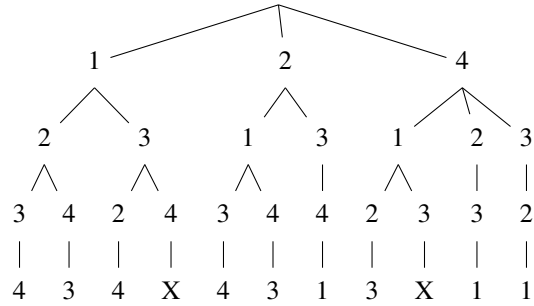
\begin{figure}
\begin{forest}
for tree={
  l sep=10pt,
  parent anchor=south,
  align=center
}
[
  [1\\
    [2\\
      [3\\
        [4\\
        ]        
      ]
      [4\\
        [3\\
        ]
      ]
    ]
    [3\\
      [2\\
        [4\\
        ]
      ]
      [4\\
        [X\\
        ]
      ]
    ]
  ]
  [2\\
    [1\\
      [3\\
        [4\\
        ]        
      ]
      [4\\
        [3\\
        ]
      ]
    ]
    [3\\
      [4\\
        [1\\
        ]
      ]      
    ]
  ]  
  [4\\
    [1\\
      [2\\
        [3\\
        ]        
      ]
      [3\\
        [X\\
        ]
      ]
    ]
    [2\\
      [3\\
          [1\\
          ]
        ]     
       ]
       [3\\
         [2\\
           [1\\
           ]
          ]
         ]  
  ]
]
\end{forest}
\caption{Downward increments for $J-P_{(13)(24)}$ in $M(4, 1)$}
\end{figure}

\begin{figure}[ht]
{\tiny
 \begin{tikzpicture}
  \node (e) at (3, 9) {$J$};
  \node (d2) at (3,  8) {$P_\sigma'$};
  \node (d1) at (1.5,  7) {$J-I-P_{(12345)}$};
  \node (d) at (4.5,  7) {$J-I-P_{(12)(345)}$};
  \node (c1) at (1.5, 6) {$I+P_{(12)(345)}$};
  \node (c) at (4.5, 6) {$I+P_{(12345)}$};
  \node (b) at (3, 5)  {${P_\sigma}$};
  \node (a) at (3,4) {$0$};
  \draw  (a) edge (b) (b) edge (c) (c) edge (d) (c) edge (d1) (d2) edge (e) (b) edge (c1) (d1) edge (d2) (d) edge (d2) (c1) edge (d1);
\end{tikzpicture}
}\qquad
{ \tiny
\begin{tikzpicture}
  \node (e) at (3, 9) { ${1,161280}$};
  \node (d2) at (3,8) {${120, 1344}$};
  \node (d1) at (4.5, 7) {${600, 24}$};
  \node (d) at (1.5, 7) {${1440, 36}$};
  \node (c1) at (1.5, 6) {${600, 4}$};
  \node (c) at (4.5, 6) {${1440, 2}$};
  \node (b) at (3,5)  {${120, 1}$};
  \node (a) at (3,4) {${1,1}$};
 \draw  (a) edge (b) (b) edge (c) (c) edge (d) (d2) edge (e) (b) edge (c1) (d1) edge (d2) (c) edge (d1) (d) edge (d2) (c1) edge (d);
\end{tikzpicture}
}
\caption{Poset diagrams for orbits in $M(5, 1)$}
\end{figure}
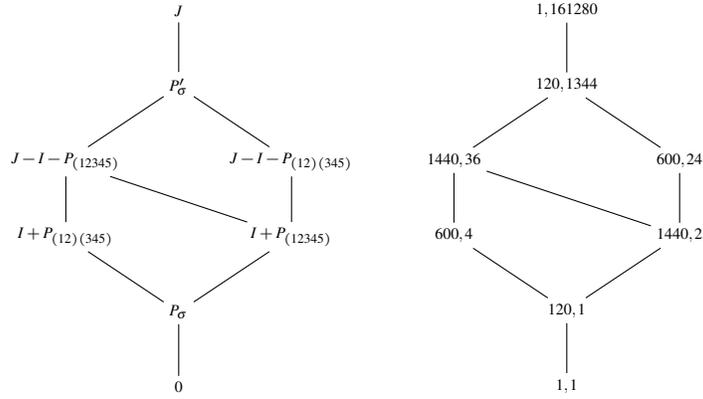

\section{The poset $M(5, 1)$}

For $n=5,$ we proceed in a similar manner to construct the poset diagrams in Figure 3.  In this case, at issue are the path numbers in rank 3.  Here we proceed as in the previous section. 

For  rank 3, the class of a covered element in rank 2 is quickly determined by checking for columns with the same entries.  When   $M=J-I-P_{(12345)}$, $M$ covers 8 elements with path number 2 and 5 elements with path number 4, giving $M$ a path number of 36.  For the other class, $M$ covers exactly 12 elements with path number 2, giving a path number of 24.

Finally, the convolution formula verifies the Latin square count:
$$161,280 = 120\cdot 1344 = 600\cdot 4\cdot 24 + 1440 \cdot 2 \cdot 36.$$

\section{The poset $M(6, 1)$}

\begin{figure}
{\tiny
 \begin{tikzpicture}
  \node (g) at (4.5, 7.8) {$[J]$};
  \node (f) at (4.5,  6.5) {$[P_\sigma']$};
  \node (e1) at (1, 5.2) {$[A']$};
  \node (e2) at (3.5, 5.2) {$[B']$};
  \node (e3) at (5.5, 5.2) {$[C']$};
  \node (e4) at (8, 5.2) {$[D']$};
  \node (d1) at (5.5,  3.9) {$[IV]$};
    \node (d2) at (7.25,  3.9) {$[V]$};
      \node (d3) at (1.75,  3.9) {$[II]$};
        \node (d4) at (9,  3.9) {$[VI]$};
          \node (d5) at (0,  3.9) {$[I]$};
            \node (d6) at (3.5,  3.9) {$[III]$};
  \node (c1) at (1, 2.6) {$[A]$};
  \node (c2) at (3.5, 2.6) {$[B]$};
  \node (c3) at (5.5, 2.6) {$[C]$};
  \node (c4) at (8, 2.6) {$[D]$};
  \node (b) at (4.5, 1.3)  {${[P_\sigma]}$};
  \node (a) at (4.5,0) {$[0]$};
  \draw (a) edge (b) (b) edge (c1) (b) edge (c2) (b) edge (c3) (b) edge (c4)
  (e1) edge (f) (e2) edge (f) (e3) edge (f) (e4) edge (f) (f) edge (g)
  (d1) edge (e1) (d1) edge (e2) (d1) edge (e3) (d1) edge (e4) (d2) edge (e3) (d3) edge (e3) (d3) edge (e1) (d3) edge (e2) (d3) edge (e3) (d4) edge (e2) (d4) edge (e3) (d4) edge (e4) (d5) edge (e1) (d5) edge (e2) (d6) edge (e2) (d6) edge (e3)
  (c1) edge (d1)  (c1) edge (d3)  (c1) edge (d5) (c2) edge (d1)  (c2) edge (d3)  (c2) edge (d4) (c2) edge (d5)  (c2) edge (d6)  
  (c4) edge (d1)  (c4) edge (d4)  (c3) edge (d1)  (c3) edge (d2) (c3) edge (d3)  (c3) edge (d4)  (c3) edge (d6) ;
  \end{tikzpicture}
}
\caption{Poset diagram for orbits in $M(6, 1)$}
\end{figure}
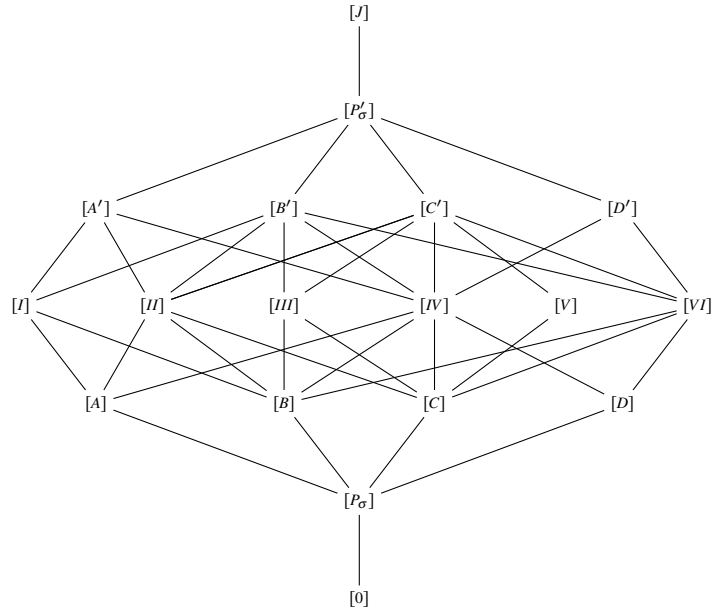

Finally we construct the poset diagram for $M(6, 1)$ (Figure 4) and verify that the number of Latin squares of size 6 is 812,851,200.  Description of the classes for each label appear in the following sections.  The general procedures for computing orbit data and path numbers are as in previous sections, but now we introduce hypergraphs to describe the classes in rank 3.

The entirety of the poset data is summarized in Table 1, with specific covering data listed in Tables 2 and 3.  For Tables 2 and 3, the entries denote the number of elements by type covered (column) by a given element (row).  Since the path number is the sum of all path numbers covered, we obtain the total path number for an entry in a given row by taking the dot product of the given row (multiplicities of type) with the top row (path numbers of covered types).
The last column of Table 3 is the top line of Table 2.

Finally the verification of the poset convolution formula is 

\begin{align*} 
812,851,200 & =  16200\cdot 4\cdot 4224 + 43200\cdot  2\cdot 4032 + 7200\cdot 4\cdot 4608 + 1350\cdot 8\cdot 5376\\
& = 86400\cdot 48^2 +  129600\cdot 48^2 + 16200\cdot 48^2 + 43200\cdot 72^2\\
&\qquad \qquad  + 200\cdot 144^2 + 21600\cdot 48^2
\end{align*}

\begin{table}
\caption{Orbit data for $M(6, 1)$}
\label{tab:1}
\begin{tabular}{|p{1cm}|p{1cm}|p{1cm}|p{1cm}||p{1cm}|p{1cm}|p{1cm}|p{1.7cm}|}
\hline\noalign{}
 $M$ & $\rho(M)$ & $o_M$ & $v(M)$ & $M$ & $\rho(M)$ & $o_M$ & $v(M)$\\
 \noalign{}\hline\noalign{}
$\hat{0}$ & 0 & 1 & 1 &  $P_\sigma$ & 1 & 720 &  1\\
 $A$ & 2 &  16200 & 4 & $I$ & 3 & 86400 & 48\\
 $B$ & 2 & 43200 & 2 & $II$ & 3 & 129600 & 48 \\
 $C$ & 2 & 7200 & 4  & $III$ & 3 & 16200 & 48\\
 $D$ & 2 & 1350 & 8 & $IV$ & 3 & 43200 & 72 \\
 $A'$ & 4 & 16200 & 4224 & $V$  & 3 & 200 & 144\\
 $B'$ & 4 & 43200 & 4032 & $VI$ & 3 & 21600 & 48 \\
 $C'$ & 4 & 7200 & 4608 & $P_\sigma'$ & 5 & 720 & 1128960\\
 $D'$ & 4 & 1350 & 5376 &   $J$ & 6 & 1 & 812851200 \\
\noalign{}\hline\noalign{}
\end{tabular}
\end{table}

\section{Orbit sizes for the second rank}

First we indicate the indexing for orbit representatives of type $I+P_\sigma$, where $\sigma$ is given by
$$A:\  (12)(3456),\qquad B:\  (123456),\qquad C:\  (123)(456),\qquad D:\  (12)(34)(56),$$
and, in rank 4, orbit representatives are given by, for instance, $A'=J-I-P_\sigma.$  Path numbers in rank 2 are given by Corollary 4.3. To use Proposition 4.8, the conjugacy classes of derangements have orders
$$A:\  90,\qquad B:\  120,\qquad C:\  40,\qquad D:\  15.$$

To determine orbit sizes, again we apply the results of Section 4. Classes $A$ and $C$ are given by Corollary 4.6, while Theorem 4.5 calculates class $B.$   Type $D$ follows from Corollary 4.7.  The rank 4 class sizes now follow by duality.

\begin{table}
\caption{Covering relations for rank 4 over rank 3}
\label{tab:1}
\begin{tabular}{|p{1.5cm}|p{1cm}|p{1cm}|p{1cm}|p{1cm}|p{1cm}|p{1cm}|p{1cm}|}
\hline\noalign{}
 Rank $4/3$ & $I$ & $II$ & $III$ & $IV$ & $V$ & $VI$ & $v(M)$\\
 $v(M)$ &  48 & 48 & 48 & 72 & 144 & 48 & \\
 \noalign{}\hline\hline\noalign{}
 $A'$ & 32  & 32 & 0  & 16 & 0 & 0 & 4224 \\
 $B'$ & 24  & 36 & 6 & 8 & 0 & 6 & 4032 \\
 $C'$ & 0 &  36 & 9 & 24 & 1 & 12 & 4608 \\
 $D'$ & 0 & 0  & 0 & 64 & 0 & 16  & 5376 \\
\noalign{}\hline\noalign{}
\end{tabular}
\end{table}

\begin{table}
\caption{Covering relations for rank 3 over rank 2}
\label{tab:1}
\begin{tabular}{|p{1.5cm}|p{1cm}|p{1cm}|p{1cm}|p{1cm}|p{1cm}|p{1cm}|p{1cm}|}
\hline\noalign{}
 Rank  $3/2$& $A$ & $B$ & $C$ & $D$ & $v(M)$\\
 $v(M)$ &  4 & 2 & 4 & 8 & \\
 \noalign{}\hline\hline\noalign{}
 $I$ & 6  & 12 & 0  & 0  & 48 \\
 $II$ & 4  & 12 & 2 & 0 & 48 \\
 $III$ & 0 &  16 & 4 & 0 & 48 \\
 $IV$ & 6 & 8  & 4 & 2 &  72 \\
  $V$ & 0 &  0 & 36 & 0 & 144 \\
 $VI$ & 0 & 12  & 4 & 1 & 48 \\
\noalign{}\hline\noalign{}
\end{tabular}
\end{table}

\section{Basic notions for hypergraphs}

In the third rank, the new issue is how to both determine and distinguish the orbits, and compute stabilizers of representative elements.  

We review the basic notions of hypergraphs. See, for instance, \cite{Bol} for general theory, but here we need little more than basic definitions.

A {\bf hypergraph} $H$ is a pair of  sets $(X, E)$, where the elements of $X$ are called the {\bf vertices} of $H$ and the elements of $E$, called {\bf hyperedges}, are subsets of $X.$ We suppose  $H$ has both finitely many vertices $x_1, \dots, x_n$ and  hyperedges $e_1, \dots, e_m$.  This definition allows for repeated hyperedges.

\begin{definition}
The {\bf incidence matrix} $M$ of $H$ is the $n\times m$ matrix with non-zero entries $m_{ij}=1$ when $x_i \in e_j.$ That is, the columns of $M$ record the vertices in each hyperedge. 
\end{definition}

\begin{definition}  Let $H$ be a hypergraph on $n$ vertices.

(a) We call $H$  {\bf $k$-uniform} if every hyperedge contains $k$-vertices.  That is, the incidence matrix has exactly $k$ ones in each column.

(b)  We call $H$ {\bf $k$-regular} if every vertex is contained in $k$ hyperedges. That is, the incidence matrix has exactly $k$ ones in each row.

(c)  Suppose $0\le k\le n.$ We call $H$ {\bf semi-magic} (of rank $k$) if it is both $k$-uniform and $k$-regular.  That is, the incidence matrix of $H$ is an element of $M(n, 1)$ with line sum $k.$
\end{definition} 
 A semi-magic hypergraph of rank 2 is a union of cycles without isolated vertices, with an obvious correspondence to derangements. For rank 3, we have an arrangement of $n$ triangles in an regular $n$-gon, where exactly  three triangles abut each vertex.

\begin{definition}
The {\bf dual} of the hypergraph $H$, denoted $H^*$, is the hypergraph with incidence matrix $M^T$.
\end{definition}
The properties of $k$-uniformity and $k$-regularity interchange under duality.  Of course, the dual of a semi-magic hypergraph of rank $k$ is also semi-magic of rank $k$.  In this case, it will be convenient to assume that $H$ and $H^*$ have the same sets of vertices.

In a similar manner, we can define complements.  
\begin{definition}
 The {\bf complement} of the hypergraph $H$, denoted by $H'$, is the hypergraph with incidence matrix $M'=J-M.$  That is, the hyperedges of $H'$ are given by the set complements $e_i'=X \setminus e_i.$
 \end{definition}
 
If $H$ is semi-magic of rank $k$, then $H'$ is semi-magic of rank $n-k$, and  complementation of semi-magic hypergraphs corresponds to duality in $M(n, 1).$

Finally, we note when two hypergraphs are the same up to indexing.
\begin{definition}
Two hypergraphs $H_1$, $H_2$ with vertex set $X$ are said to be {\bf equivalent} if there exists a bijection on $X$ that induces bijections between the hyperedges of $H_1$ and $H_2$.   

The set of all equivalences of $H$ with itself is called the {\bf automorphism group} of $H$, denoted by $G_H$.  
\end{definition}

\section{The action of $G$ on hypergraphs}

With the language of hypergraphs in place, we characterize the orbits of $M(n, 1)$ in rank $k$ with respect to the notion of equivalence of semi-magic hypergraphs of rank $k.$   The action of $G$ on $M(n, 1)$ induces an action on hypergraphs by way of the incidence matrix $M$.   While this action applies to any hypergraph with a square incidence matrix, we assume $M$ is in $M(n, 1)$, so the hypergraph is semi-magic.  While the hypergraph is entirely determined by the incidence matrix, we will find it useful to consider hypergraph pairs generated by the row and column vectors.

\begin{definition}
Suppose $M$ is in $M(n, 1)$ and $H$ is the hypergraph associated to $M.$ We call $(H, H^*)$ the {\bf hypergraph pair} associated to $M.$
\end{definition}

The following proposition characterizes the equivalence of hypergraphs entirely as row and column switches of $M$.  In the next section, we will see an example where $H$ and $H^*$ are not equivalent.
\begin{proposition}  Suppose the hypergraph pair $(H, H^*)$ is associated to $M$.  Generators of $G$ act as follows
\begin{enumerate}
\item[$R(\sigma)$]:  $\sigma$ induces a bijection on $X$ with respect to $H$, and  the edges of $H^*$ are permuted,
\item[$C(\tau)$]:  $\tau$ induces a bijection of $X$ with respect to $H^*$, and the edges of $H$ are permuted, and
\item[$T$]: transpose interchanges $H$ and $H^*$.
\end{enumerate}
That is, each orbit of $G$ on $M(n, 1)$ in rank $k$ corresponds to an unordered pair of equivalence classes of semi-magic hypergraphs of rank $k$ on $n$ vertices.
\end{proposition}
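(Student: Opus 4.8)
The plan is to establish the three displayed assertions by direct computation with the incidence matrix, and then to read off the orbit classification from them.

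First I would record the effect of each generator on $M$ using the formulas of Section~2: $R(\sigma)\cdot M = P_\sigma M$, $C(\tau)\cdot M = M P_\tau^{-1}$, and $T\cdot M = M^T$. Left multiplication by $P_\sigma$ permutes the rows of $M$, that is, relabels the vertices of $H$ via $\sigma$; reading the $j$-th column of $P_\sigma M$ shows that the hyperedge $e_j$ of $H$ becomes $\sigma(e_j)$, so $\sigma$ is an equivalence from $H$ onto the hypergraph of $R(\sigma)\cdot M$. At the same time $(P_\sigma M)^T = M^T P_\sigma^{-1}$ is $M^T$ with its columns permuted, so the hyperedges of $H^*$ are merely reindexed while its vertex set is untouched; this is the first assertion. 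The second is the mirror image, obtained from $C(\tau)\cdot M = M P_\tau^{-1}$ (permuting the columns of $M$ reindexes the hyperedges of $H$, while $\tau$ relabels the vertices of $H^*$), and the third is immediate since $M^T$ is by definition the incidence matrix of $H^*$, so the pair $(H,H^*)$ is carried to $(H^*,H)$.

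Next I would assign to $M$ the unordered pair $\{[H],[H^*]\}$ of equivalence classes and show it is a complete orbit invariant. For well-definedness: by the first assertion $R(\sigma)$ replaces $H$ by an equivalent hypergraph and leaves $[H^*]$ fixed (reordering the hyperedge list, even when hyperedges repeat, does not change a hypergraph up to equivalence); by the second assertion $C(\tau)$ does the symmetric thing; by the third, $T$ interchanges the two entries of the pair; since these generate $G$, the pair $\{[H],[H^*]\}$ depends only on the orbit of $M$. For injectivity: suppose $M_1,M_2$ in $M(n,1)$ have rank $k$ and $\{[H_1],[H_1^*]\}=\{[H_2],[H_2^*]\}$. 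Then $[H_1]=[H_2]$ or $[H_1]=[H_2^*]$; in the latter case replace $M_2$ by $T\cdot M_2=M_2^T$, which lies in the same orbit and whose hypergraph is $H_2^*$, reducing to the former case. An equivalence $H_1\to H_2$ is a bijection $\sigma$ of $X$ carrying the hyperedge multiset of $H_1$ to that of $H_2$, so $P_\sigma M_1$ and $M_2$ have the same multiset of columns and hence differ by a column permutation $P$; then $M_2=P_\sigma M_1 P^{-1}=R(\sigma)C(\tau)\cdot M_1$ for a suitable column permutation $\tau$, so $M_1$ and $M_2$ lie in one $G$-orbit. Together with well-definedness this is the asserted correspondence.

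I do not expect a genuine obstacle here, since the proposition is essentially a dictionary; the only place needing care is the bookkeeping in the first step --- matching left and right multiplication with rows and columns, and keeping straight which operation \emph{relabels vertices} as opposed to merely \emph{reindexing hyperedges} --- together with the small but necessary remark that permuting (possibly repeated) hyperedges does not change a hypergraph's equivalence class, which is exactly what keeps the unordered pair stable under $R(\sigma)$, $C(\tau)$, and $T$.
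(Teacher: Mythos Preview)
Your proposal is correct and is essentially the natural verification one would write down; the paper in fact states this proposition without proof, treating it as an immediate dictionary between the $G$-action on incidence matrices and the notion of hypergraph equivalence. Your argument supplies exactly the expected details---checking each generator's effect and then verifying that the unordered pair $\{[H],[H^*]\}$ is a complete orbit invariant---with appropriate care about repeated hyperedges.
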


Now the stabilizer of $M$ may be reduced to consideration of automorphism groups.

\begin{proposition} 

Let $(H, H^*)$ be the hypergraph pair associated to $M$.  

(a)  $G_H$ and $G_{H^*}$ are isomorphic.

(b)  If $H$ and $H^*$ are inequivalent, then the stabilizer of $M$ is isomorphic to $G_H$.

(b)  Otherwise, the stabilizer of $M$ contains $G_H$ as a normal subgroup of index 2. 
\end{proposition}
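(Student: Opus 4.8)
The plan is to combine the $G$-action description from Proposition~10.2 with the Orbit--Stabilizer bookkeeping already used in Section~4. First I would set up notation: write the stabilizer of $M$ in $G$ as $\mathrm{Stab}_G(M)$, and recall from Proposition~10.2 that an element $R(\sigma)C(\tau)$ (no transpose) fixes $M$ exactly when $\sigma$ is an automorphism of $H$ realizing, via the induced permutation of hyperedges, the column permutation $\tau^{-1}$ — equivalently, $\sigma$ is an automorphism of $H$ and $\tau$ is the uniquely determined induced permutation on columns; symmetrically $\tau$ must then be an automorphism of $H^*$. The key first lemma is that the data of $\sigma$ alone determines $\tau$ (since $M$ has distinct columns, a row permutation that preserves $M$ permutes the columns in a unique way), so the map $R(\sigma)C(\tau) \mapsto \sigma$ gives an injective homomorphism from the transpose-free part of $\mathrm{Stab}_G(M)$ into $G_H$; that it is onto $G_H$ is exactly statement (1) of Proposition~10.2 read backwards. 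This proves the transpose-free part of $\mathrm{Stab}_G(M)$ is isomorphic to $G_H$, and the same argument on columns gives $G_{H^*}$, yielding part (a) as a byproduct (both are isomorphic to that subgroup).

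For part (b) — the case $H \not\sim H^*$ — I would argue that no stabilizing element can contain a transpose factor: by statement (3) of Proposition~10.2, $T$ interchanges $H$ and $H^*$, so any $g = R(\sigma)C(\tau)T$ in $\mathrm{Stab}_G(M)$ would send the hypergraph of $M$ to an equivalent copy of $H^*$, forcing $H \sim H^*$, a contradiction. Hence $\mathrm{Stab}_G(M)$ equals its transpose-free part, which by the previous paragraph is isomorphic to $G_H$.

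For part (c) — the case $H \sim H^*$ — pick any bijection $\phi$ on $X$ realizing an equivalence $H \xrightarrow{\sim} H^*$; one checks that the corresponding group element $g_0 = R(\phi)\,T$ (possibly corrected by a column permutation to land back on $M$ exactly) stabilizes $M$ and does contain a transpose factor. Then $\mathrm{Stab}_G(M)$ is generated by its transpose-free part (isomorphic to $G_H$, and of index at most $2$ since the transpose factor is a single $\mathbb{Z}/2$ worth of ambiguity) together with $g_0$; the index is exactly $2$ because $g_0 \notin G_H$-part, and normality is automatic for an index-$2$ subgroup. I would include a short verification that such a correcting column permutation exists and is unique, again using that $M$ has distinct columns.

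The main obstacle I anticipate is the careful check in part (c) that a genuine transpose-containing element of $\mathrm{Stab}_G(M)$ exists — i.e., translating an abstract hypergraph equivalence $H \sim H^*$ into an honest group element fixing $M$ on the nose rather than merely fixing its isomorphism class. This requires pinning down how the column permutation $\tau$ must be chosen relative to $\phi$, and confirming it is forced (the distinct-columns property of elements of $M(n,1)$ is what makes this work, and is worth stating explicitly as the linchpin). Everything else is routine Orbit--Stabilizer and index-$2$ normality bookkeeping. A minor secondary point: one should note the two parts both labelled ``(b)'' in the statement are meant to be (b) and (c), and I would phrase the proof accordingly.
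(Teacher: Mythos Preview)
Your overall architecture matches the paper's: split the stabilizer according to whether the transpose factor is present, identify the transpose-free part with $G_H$, and then argue separately in the equivalent and inequivalent cases. Parts (b) and (c) of your outline are essentially the paper's argument verbatim.

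The gap is in your ``key first lemma.'' You assert that elements of $M(n,1)$ have distinct columns, and you lean on this both to make $R(\sigma)C(\tau)\mapsto\sigma$ injective and to pin down the correcting column permutation in part (c). This is false. The paper explicitly allows repeated hyperedges (Section~9), and several of the rank-3 representatives in Section~11 exhibit repeated columns: type $V$ has each column repeated three times, and types $Ia$ and $III$ each have a pair of identical columns. In such cases $C(\tau)$ can lie in the stabilizer with $\sigma=e$, so your projection is not injective and your identification of the transpose-free stabilizer with the vertex-automorphism group $G_H$ (as defined in Definition~9.5) fails.

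The paper sidesteps this in part (a) by giving the isomorphism $G_H\to G_{H^*}$ directly as conjugation by $T$, i.e.\ $R(\sigma)C(\tau)\mapsto R(\tau)C(\sigma)$, rather than factoring through a projection to $S_n$. For parts (b) and (c) the paper is terse and effectively \emph{treats} $G_H$ as the transpose-free part of the stabilizer inside $G$; it never attempts the projection you describe. If you want to retain your approach, you must either (i) restrict to the case of distinct hyperedges and handle repeated hyperedges separately, or (ii) work with $G_H$ as a subgroup of $G$ from the outset, as the paper implicitly does, rather than as a subgroup of $S_n$.
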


\begin{proof} 
For part (a), the isomorphism is given by $g\mapsto TgT$, or $R(\sigma)C(\tau)\mapsto R(\tau)C(\sigma).$  For part (b), inequivalence means no element in the stabilizer exchanges $H$ and $H^*$, so the elements cannot have a factor of $T$.  

On the other hand, if $H$ and $H^*$ are equivalent, $R(\sigma)C(\tau)\cdot M = T\cdot M$ for some $\sigma$ and $\tau$, and thus there  exists an element $T_1$ in the stabilizer of $M$ that maps $H$ to $H^*$.  Since  elements of $G_H$ have no factor of $T$, $G_H$ is normalized by $T_1$.  Finally, the index assertion follows since $T_1^2$ is in $G_H$.
\end{proof}

\begin{figure}
$$Ia:\ \ \begin{bmatrix}
1 & 1 &  0 & 0 & 1 & 0\\
1 & 1 &  0 & 1 & 0 & 0\\
0 & 0 & 1 & 1 & 1 & 0\\
0 & 0 & 1 & 1 & 0 & 1\\
0 & 0 & 1 & 0 & 1 & 1\\
1 & 1 & 0 & 0 & 0 & 1\\
\end{bmatrix}\quad \rightarrow\quad 
\begin{matrix}
1 & 2 &  3 & 4 & 5 & 6\\
2 & 6 &  4 & 3 & 1 & 5\\
6 & 1 & 5 & 2 & 3 & 4\\
\end{matrix}\quad \rightarrow\quad\hspace{-125pt}
\vcenter{\includegraphics[scale=.6]{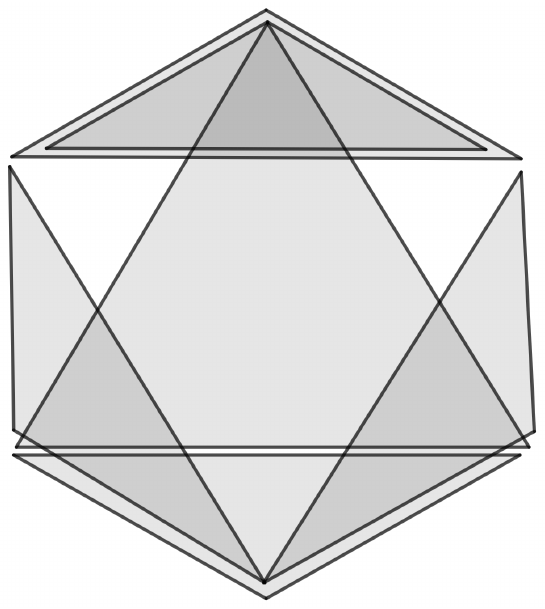}}$$
$$Ib:\ \ \begin{bmatrix}
1 & 0 &  0 & 0 & 1 & 1\\
1 & 1 &  1 & 0 & 0 & 0\\
0 & 1 & 1 & 1 & 0 & 0\\
0 & 0 & 1 & 1 & 0 & 1\\
0 & 1 & 0 & 1 & 1 & 0\\
1 & 0 & 0 & 0 & 1 & 1\\
\end{bmatrix}\quad \rightarrow\quad 
\begin{matrix}
1 & 2 &  3 & 4 & 5 & 6\\
2 & 5 &  4 & 3 & 6 & 1\\
6 & 3 & 2 & 5 & 1 & 4\\
\end{matrix}\quad \rightarrow\quad\hspace{-125pt}
\vcenter{\includegraphics[scale=.6]{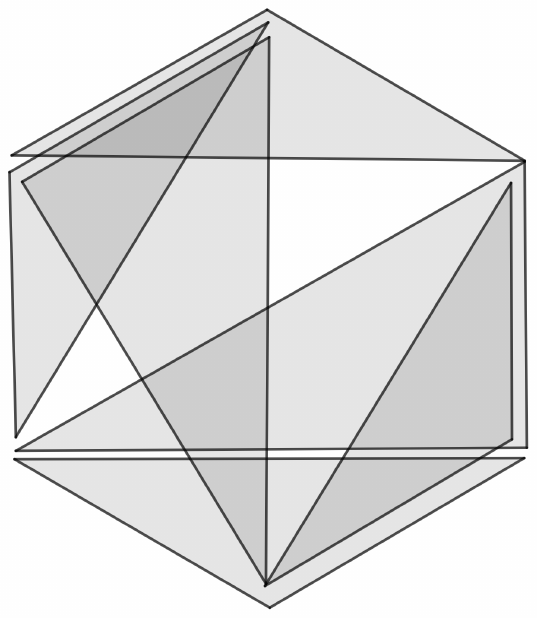}}$$
$$II:\ \ \ \begin{bmatrix}
1 & 1 &  0 & 0 & 0 & 1\\
1 & 1 &  0 & 0 & 1 & 0\\
1 & 0 & 1 & 1 & 0 & 0\\
0 & 0 & 1 & 1 & 1 & 0\\
0 & 0 & 0 & 1 & 1 & 1\\
0 & 1 & 1 & 0 & 0 & 1\\
\end{bmatrix}\quad \rightarrow\quad 
\begin{matrix}
1 & 2 &  3 & 4 & 5 & 6\\
2 & 1 &  6 & 3 & 4 & 5\\
3 & 6 & 4 & 5 & 2 & 1\\
\end{matrix}\quad \rightarrow\quad\hspace{-125pt}
\vcenter{\includegraphics[scale=.6]{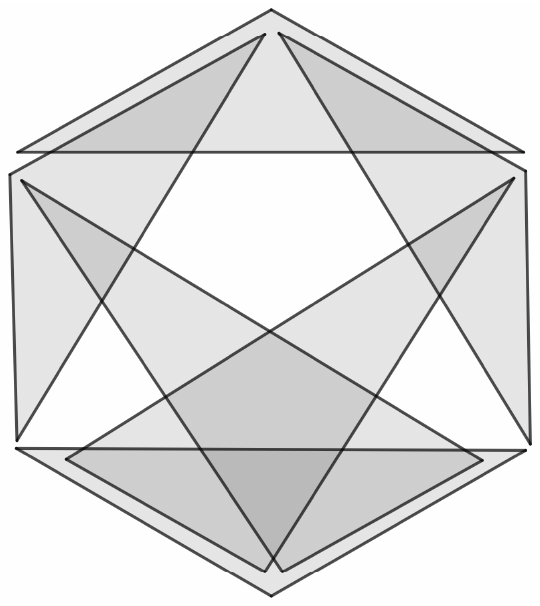}}$$
$$III:\ \ \begin{bmatrix}
1 & 1 &  1 & 0 & 0 & 0\\
1 & 1 &  0 & 0 & 0 & 1\\
0 & 0 & 1 &  1 & 1 & 0\\
0 & 0 & 0 & 1 & 1 & 1\\
0 & 0 & 1 &  1 & 1 & 0\\
1 & 1 & 0 &  0 & 0 &1 \\
\end{bmatrix}\quad \rightarrow\quad 
\begin{matrix}
1 & 2 &  3 & 4 & 5 & 6\\
2 & 6 &  1 & 5 & 3 & 4\\
6 & 1 & 5 & 3 & 4 & 2\\
\end{matrix}\quad \rightarrow\quad\hspace{-125pt}
\vcenter{\includegraphics[scale=.6]{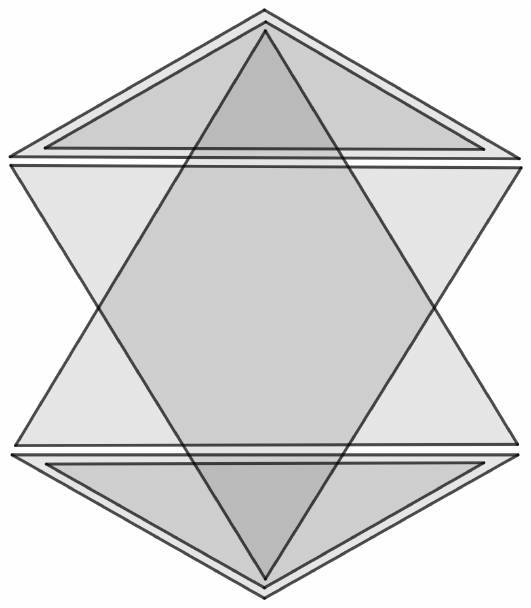}}$$
\caption{Orbits with rank $3$:\ cases $I-III$}
\end{figure}

\begin{figure}

$$IV: \ \ \begin{bmatrix}
1 & 1 &  0 & 0 & 0 & 1\\
1 & 1 &  1& 0 & 0 & 0\\
0 & 1 & 1 & 1 & 0 & 0\\
0 & 0 & 1 & 1 & 1 & 0\\
0 & 0 & 0 & 1 & 1 & 1\\
1 & 0 & 0 & 0 & 1 & 1\\
\end{bmatrix}\quad \rightarrow\quad 
\begin{matrix}
1 & 2 &  3 & 4 & 5 & 6\\
2 & 3 &  4 & 5 & 6 & 1\\
6 & 1 & 2 & 3 & 4 & 5\\
\end{matrix}\quad \rightarrow\quad\hspace{-125pt}
\vcenter{\includegraphics[scale=.6]{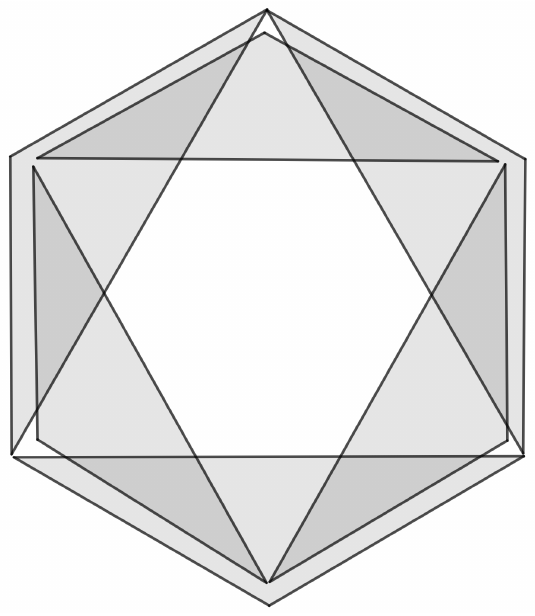}}$$
$$V:\ \ \ \ \begin{bmatrix}
1 & 1 &  1 & 0 & 0 & 0\\
1 & 1 &  1& 0 & 0 & 0\\
1 & 1 & 1 & 0 & 0 & 0\\
0 & 0 & 0 & 1 & 1 & 1\\
0 & 0 & 0 & 1 & 1 & 1\\
0 & 0 & 0 & 1 & 1 & 1\\
\end{bmatrix}\quad \rightarrow\quad 
\begin{matrix}
1 & 2 &  3 & 4 & 5 & 6\\
2 & 3 &  1 & 5 & 6 & 4\\
3 & 1 & 2 & 6 & 4 & 5\\
\end{matrix}\quad \rightarrow\quad\hspace{-125pt}
\vcenter{\includegraphics[scale=.6]{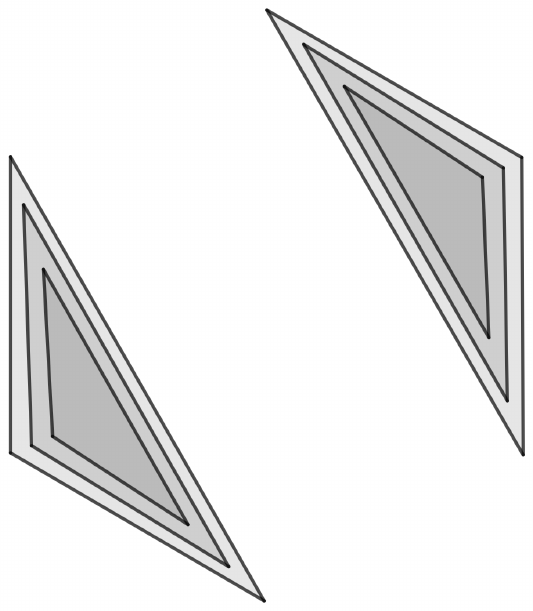}}$$
$$VI:\ \  \begin{bmatrix}
1 & 1 &  0 & 0 & 1 & 0\\
1 & 1 &  0 & 0 & 0 & 1\\
1 & 0 & 1 & 1 & 0 & 0\\
0 & 1 & 1 & 1 & 0 & 0\\
0 & 0 & 1 & 0 & 1 & 1\\
0 & 0 & 0 & 1 & 1 & 1\\
\end{bmatrix}\quad \rightarrow\quad 
\begin{matrix}
1 & 2 &  3 & 4 & 5 & 6\\
2 & 1 &  4 & 3 & 6 & 5\\
3 & 4 & 5 & 6 & 1 & 2\\
\end{matrix}\quad \rightarrow\quad\hspace{-125pt}
\vcenter{\includegraphics[scale=.6]{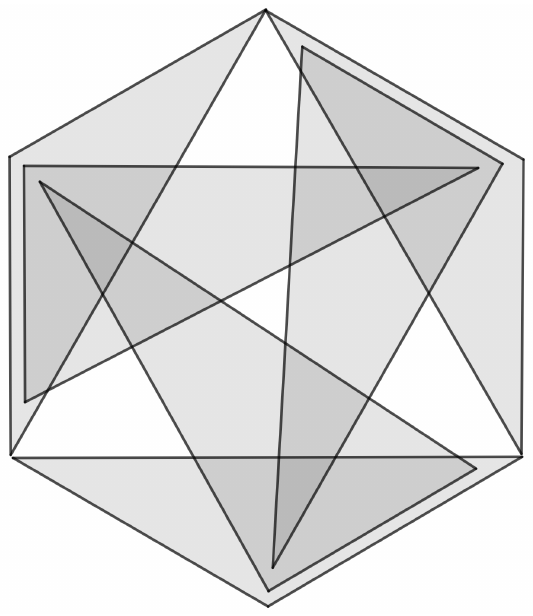}}$$
\caption{Orbits for rank $3:$\ cases $IV-VI$}
\end{figure}

\section{Orbit Data for Rank 3}

We now consider the rank 3 orbits.   There are six orbits, and all but one pair correspond to  self-dual hypergraph classes.  The hypergraphs are given in Figures 5 and 6; vertices are labeled with a 1 at the top and increasing clockwise.  For purposes of verification, the representative $M$ is chosen to exhibit as much symmetry as possible in the corresponding hypergraph; we include the corresponding Latin rectangle for reference. 

\begin{theorem}
The orbits of $M(6,1)$ in rank 3 are given by the equivalence classes in Figures 5 and 6.  Only the pair $\{Ia, Ib\}$ has distinct hypergraph classes under transpose.  These classes may be distinguished by the following features:
\begin{enumerate}
\item [$Ia$]:  one pair of repeated triangles, disjoint from a three-cycle of double edges,
\item [$Ib$]:  three double edges at a vertex, disjoint from a unique triple edge,
\item [$II$]:  two pairs of adjacent double edges,
\item [$III$]: two pairs of repeated triangles, each containing a triple edge,
\item [$IV$]: a six-cycle of double edges,
\item [$V$]: a disjoint pair of triangle triplets, and
\item [$VI$]: a triplet of  disjoint double edges.
\end{enumerate}
\end{theorem}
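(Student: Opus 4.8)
## Proof proposal

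The plan is to establish the theorem in three stages: first, an exhaustive enumeration of the rank-3 semi-magic hypergraphs on 6 vertices up to equivalence; second, a verification that exactly the six (respectively seven, counting $Ia$ and $Ib$ separately) listed classes arise and that they are distinguished by the stated structural features; and third, a check that $Ia$ and $Ib$ are interchanged by transpose while the remaining classes are self-dual in the sense of Proposition~10.2.

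First I would reduce the problem to a finite search. A rank-3 semi-magic hypergraph on 6 vertices is, by Definition~9.2(c), an element of $M(6,1)$ with line sum 3; by Proposition~10.2 its $G$-orbit corresponds to an unordered pair $\{[H],[H^*]\}$ of equivalence classes. To enumerate these, I would use the Latin-rectangle model of Section~3: such an $M$ is a sum of three permutation matrices, so after left-multiplying by a permutation matrix (a harmless $R(\sigma)$) I may assume the first row of the corresponding $3\times 6$ Latin rectangle is the identity $123456$. The second and third rows are then derangement-type completions, and one enumerates the (relatively few) Latin rectangles this way, or equivalently runs through the $40$-odd arrangements of $6$ triangles in a hexagon with three triangles at each vertex. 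Cycle switching (as mentioned in the introduction, e.g.\ \cite{Wa1}) lets one normalize further. This is a routine but genuinely finite computation; I would organize it by first classifying the possible \emph{multisets of repeated hyperedges}: a column of $M$ can be repeated $1$, $2$, or $3$ times, and the possibilities (a triple edge; pairs of double edges; a single sextuple of distinct triangles; etc.) give a coarse partition that already separates most of the listed cases.

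Second, for each surviving candidate I would record the structural invariant named in the statement --- number and adjacency pattern of repeated triangles, triple edges, double edges, and cycles of double edges in the hypergraph $H$ --- and check these invariants are pairwise distinct across $\{Ia, Ib, II, III, IV, V, VI\}$, which immediately implies no two of the listed matrices lie in the same orbit, and that the search produced nothing new. Here I would lean on Figures~5 and~6, whose explicit incidence matrices and Latin rectangles serve as the representatives; verifying that each exhibits the claimed feature is a direct inspection. The dual $H^*$ is read off from $M^T$ (Definition~9.4), and for cases $II$ through $VI$ one checks $H\simeq H^*$ by exhibiting the permutation realizing the equivalence --- most conveniently the one visible as a symmetry of the chosen representative.

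The main obstacle, and the step I would spend the most care on, is \textbf{confirming that $Ia$ and $Ib$ are genuinely inequivalent as hypergraphs but do form a single $G$-orbit via transpose.} Inequivalence is handled by the invariant: $Ia$ has a pair of repeated triangles disjoint from a $3$-cycle of double edges, whereas $Ib$ has three double edges meeting at a common vertex and a single triple edge --- these local degree/multiplicity patterns cannot match under any vertex bijection. That they nonetheless share a $G$-orbit is exactly the statement $H^*_{Ia}\simeq H_{Ib}$, i.e.\ transposing the incidence matrix of $Ia$ and relabeling yields that of $Ib$; I would verify this by computing $M^T$ for the $Ia$ representative in Figure~5 and exhibiting the explicit row/column permutation carrying it to the $Ib$ representative. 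Once this is in hand, Proposition~10.3 shows the remaining self-dual cases have stabilizer equal to $G_H$ extended by a transpose-type element of index $2$, while the $\{Ia,Ib\}$ orbit has stabilizer $G_H$, which cross-checks against the orbit sizes in Table~1.
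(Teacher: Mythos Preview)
Your proposal is correct and follows essentially the same approach as the paper: both argue by direct finite enumeration of rank-3 semi-magic hypergraphs, organized according to the pattern of repeated triangles and shared edges, with the distinguishing features and the transpose relation between $Ia$ and $Ib$ verified by inspection of the explicit representatives. The paper's proof is considerably terser than yours and adds one remark you omit, namely that the exhaustiveness of the list is independently confirmed by the extraction process used to compute the rank-4-over-rank-3 covering data; your Latin-rectangle normalization and your cross-check against the orbit sizes in Table~1 are reasonable elaborations but not part of the paper's argument.
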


\begin{proof}
The last two statements are checked directly, and  the second statement also holds since the stabilizer order is preserved under transpose.   That there are no other classes may be seen by enumerating all possibilities for repeated triangles and shared edges while keeping exactly 3 edges at a vertex.  We see this also by exhaustion when performing the extraction process to compute covering data in rank 4.
\end{proof}
Noting Proposition 10.3, we calculate stabilizers as follows:

$Ia$ and $Ib$:  no elements exchange $H$ and $H^*$, so the stabilizers for both cases are isomorphic to $G_H.$  In $Ia,$ a subgroup isomorphic to $S_3$ arises from permutations of the three cycle of double edges. In turn, the exchange of the nested triangles centralizes this subgroup.  The stabilizer has order 12.

$II$: the double edge adjacencies must be preserved; there are eight symmetries of this edge set, but only 4 come from elements of  $G_H$.  The stabilizer has order 8.

$III$: there are 16 symmetries that preserve each  of the doubly-repeated triangles, normalized by the exchange. The stabilizer has 64 elements.

$IV$: the six-cycle of double edges must be preserved, so $G_H$ is isomorphic to $D_{12}.$ The stabilizer has 24 elements.

$V$:  here the blocks of $M$ are preserved by $(S_3)^4$, further normalized normalized by switching blocks. Of course, each $S_3$ either re-orients a triangle or permutes a triple of nested triangles. The order of the stabilizer is 5184, and finally

$VI:$   $G_H$ preserves the set of three double edges, for a maximum of 48 elements. Only 24 of these arise from  elements of $G_H$, so the stabilizer has 48 elements. Again this can be deduced directly from $M$.
\vspace{5pt}

Finally, considering covering data, we need only modify  the extraction technique for rank 4 over rank 3.  The only change occurs in the final step;  an efficient visual identification of the orbit in rank 3 is to sketch the hypergraph, noting first the occurrence of nested triangles, and, if none, shared edges. Here 322 cases require checking.

For instance, extracting $432615$ from $J-I-P_{(12)(3456)}$ in class $A'$, we see that the resultant class in rank 3 is $Ib$:  

$$
\quad \begin{matrix}
3 & 4 & 6 & 2 & \fbox{1} & \fbox{5}\\
\fbox{4} & 6 &  5 & 3 & 2 & 1\\
5 & \fbox{3} & 1 & \fbox{6} & 4 & 2\\
6 & 5 & \fbox{2} & 1 & 3 & 4\\
\end{matrix}\quad \to\ \qquad 
\begin{matrix}
3 & 5 &  6 & 1 & 4 & 2\\
5 & 6 & 1 & 2 & 3 & 4\\
6 & 4 &  5 & 3 & 2 & 1\\
\end{matrix}
\quad \rightarrow\ \ \ 
\vcenter{\includegraphics[scale=.2]{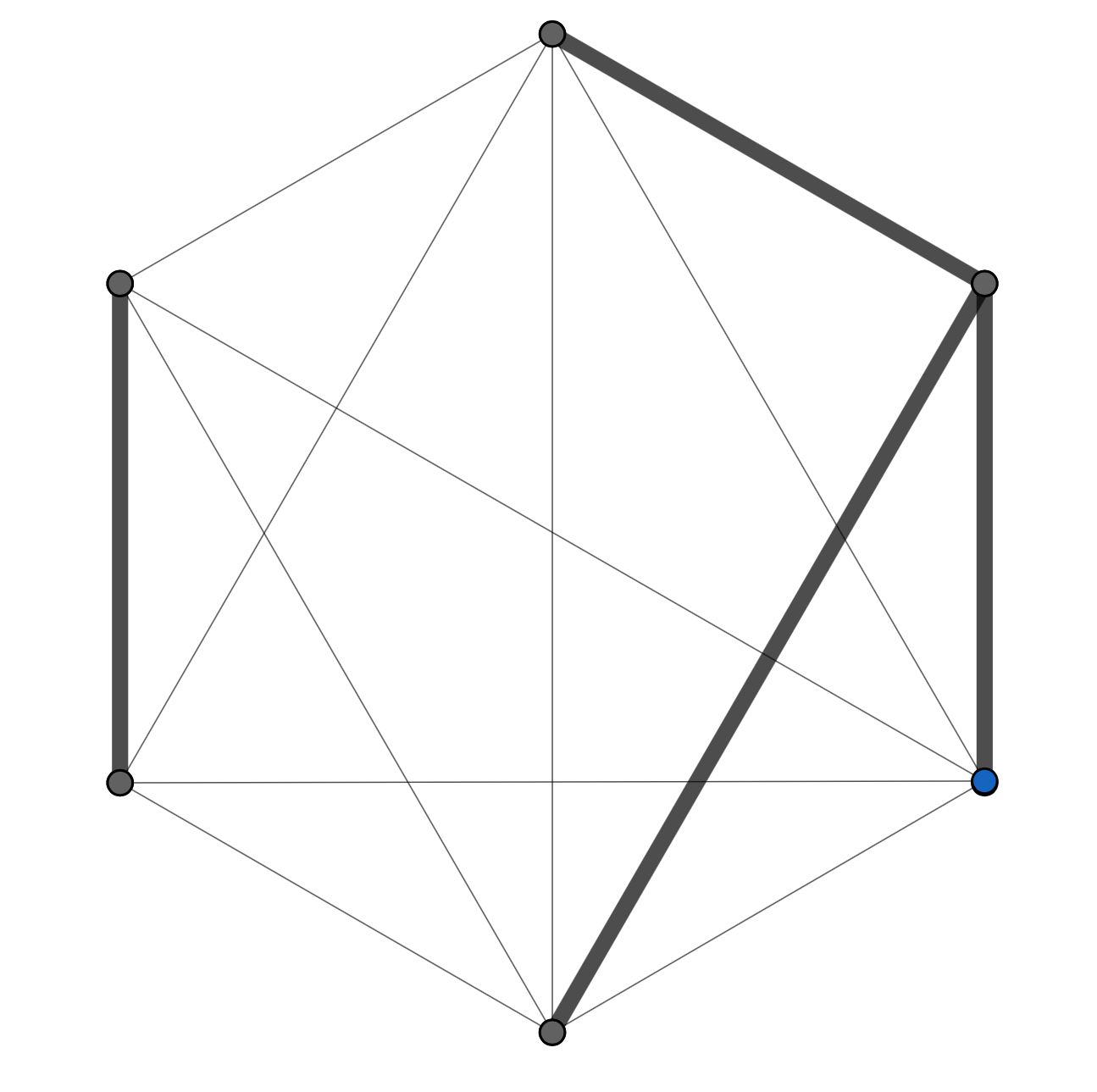}}.
$$

\section{Syzygies and distinct sums}

Finally we briefly consider syzygies in $M(n)$ as a measure the non-uniqueness of $M$ as a sum.  See \cite{StC} for the commutative algebra formulation, although we continue our elementary approach.    A syzygy, or dependence relation with respect to the monoid structure, may be represented by a triple $(M, S_1, S_2),$ where $M$ is in $M(n)$ and $S_i$ is a multiset of permutation matrices that sums to $M$; we allow for  repetitions in $S_i$.  If $\Sigma(S)$ represents the sum of the elements of $S$, then $\Sigma(S_1)-\Sigma(S_2)=0$ represents a non-trivial syzygy if $S_1$ and $S_2$ are distinct.

One special feature of $M(n, 1)$ is that, while there may be several sums that represent a given $M$, the elements in a given sum for $M$ are not repeated. 

\begin{theorem} Let $M$ be in $M(n, 1),$ and let $P(M)$ be the set of subsets of permutation matrices that sum to $M$.  Then the path number
$$v(M) = \rho(M)!\cdot |P(M)|,$$
 and  $|P(M)|$ depends only on the  orbit of $M$.
\end{theorem}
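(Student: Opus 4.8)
The plan is to count maximal chains from $\hat 0$ to $M$ by first recording which \emph{set} of permutation matrices is used along the way, and then counting the orderings. First I would recall from Section 3 that a maximal chain from $\hat 0$ to $M$ is exactly an ordered list $P_{\sigma_1}, \dots, P_{\sigma_r}$ of permutation matrices (with $r=\rho(M)$) whose partial sums stay below $J$ and whose total is $M$; equivalently, reading the $\sigma_i$ in single-line notation top to bottom, a Latin rectangle whose column-sums realize $M$. The key structural observation is that in $M(n,1)$, since all entries of $M$ are $0$ or $1$, no permutation matrix can appear twice in such a sum: if $P_\sigma$ occurred with multiplicity $\ge 2$ the diagonal-in-$\sigma$ entries of $M$ would be at least $2$. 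Hence the underlying multiset $S=\{P_{\sigma_1},\dots,P_{\sigma_r}\}$ is genuinely a set of size $r$, and so $S\in P(M)$.

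The heart of the argument is then: for a \emph{fixed} $S\in P(M)$, every one of the $r!$ orderings of $S$ gives a valid maximal chain. This is the step I expect to be the main (though modest) obstacle, since a priori some orderings might violate the constraint that every partial sum lie in $M(n,1)$, i.e.\ have entries $\le 1$. But this is automatic: if $S=\{P_{\sigma_1},\dots,P_{\sigma_r}\}$ and $\sum_i P_{\sigma_i}=M\le J$, then for any subset $T\subseteq S$ we have $\Sigma(T)\le \Sigma(S)=M\le J$ entrywise, so $\Sigma(T)\in M(n,1)$; thus \emph{every} partial sum along \emph{any} ordering of $S$ is a legitimate element of the poset, and consecutive partial sums differ by a single permutation matrix, hence the ordering is a maximal chain $\hat 0 \lessdot \cdots \lessdot M$. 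Conversely distinct orderings of $S$, and orderings coming from distinct sets $S$, give distinct chains (the chain determines its sequence of rank-one increments, hence the ordered list of $P_{\sigma_i}$, hence both $S$ and the order). Combining the two directions gives a bijection between maximal chains to $M$ and pairs $(S,\text{ordering of }S)$ with $S\in P(M)$, so
$$v(M) \;=\; \sum_{S\in P(M)} r! \;=\; \rho(M)!\cdot |P(M)|.$$

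Finally, the claim that $|P(M)|$ depends only on the orbit of $M$ follows from the fact that $v(M)$ depends only on the orbit of $M$ (stated at the end of Section 2) together with the fact that $\rho$ is constant on orbits: dividing, $|P(M)| = v(M)/\rho(M)!$ is orbit-invariant. Alternatively, and more directly, one checks that each $g\in G$ sends a set $S$ summing to $M$ to the set $g\cdot S := \{g\cdot P \mid P\in S\}$ summing to $g\cdot M$, and this is a bijection $P(M)\to P(g\cdot M)$ with inverse induced by $g^{-1}$; this second formulation also makes transparent the compatibility of $P(\cdot)$ with the group action used implicitly throughout the paper. Either way completes the proof.
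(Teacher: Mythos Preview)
Your proof is correct and follows the same idea the paper uses. The paper does not give a formal proof: it simply notes just before the theorem that in $M(n,1)$ the permutation matrices in any sum for $M$ are not repeated, and just after the theorem remarks that ``in terms of Latin rectangles as paths, rows may be ordered by increasing first elements,'' which is exactly your bijection between chains and pairs $(S,\text{ordering})$. Your write-up is more careful---in particular, you make explicit the check that every ordering of $S$ yields a valid chain because partial sums satisfy $\Sigma(T)\le \Sigma(S)=M\le J$---but the underlying argument is the same.
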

In terms of Latin rectangles as paths, rows may be ordered by increasing first elements.  For the orbits of $M(n, 1)$ for $3\le n \le 5,$  $|P(M)|$ is given in Figure 7.  

Applying the theorem to poset convolution, we obtain
\begin{corollary}  In the notation of Theorem 12.1,  the number of Latin squares of size $n$ is given by 
$$v(J) = (n-k)!\ k!\  \sum\limits_{M}\ o_M\cdot |P(M)|\cdot |P(J-M)|,$$
where $0\le k\le n$ is fixed, $M$ ranges over a set of representatives for the orbits in rank $k,$ and $o_M$ is the number of elements in the orbit for $M$.
\end{corollary}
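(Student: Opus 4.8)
The plan is to combine Theorem 12.1 with the orbit-wise poset convolution formula already verified in the earlier sections. Recall (from Section 2) that $M(n,1)$ is a self-dual finite graded poset with $\hat 0 = 0$, $\hat 1 = J$, rank function $\rho$, and the order-reversing involution $M\mapsto M' = J-M$. The convolution identity for counting maximal chains through a fixed rank $k$ states that
$$v(J) \;=\; \sum_{\substack{M\,:\,\rho(M)=k}} v(M)\cdot v(J-M),$$
where the sum is over \emph{all} elements $M$ of rank $k$; grouping these elements into $G$-orbits and using that $v(M)$ (hence also $v(J-M)$, since $M\mapsto J-M$ is $G$-equivariant and orbit sizes are preserved) depends only on the orbit of $M$, this becomes
$$v(J) \;=\; \sum_{M} o_M\cdot v(M)\cdot v(J-M),$$
with $M$ now ranging over a set of orbit representatives in rank $k$. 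This is precisely the formula invoked in Sections 5, 6, and 7, so I would cite it as established.

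Next I would substitute Theorem 12.1. For any $N$ in $M(n,1)$ we have $v(N) = \rho(N)!\cdot |P(N)|$. Applied to $N = M$ with $\rho(M)=k$, this gives $v(M) = k!\cdot |P(M)|$. Applied to $N = J-M$, and using $\rho(J-M) = \rho(J)-\rho(M) = n - k$ (from $\rho(M') = ns - \rho(M)$ specialized to $s=1$), this gives $v(J-M) = (n-k)!\cdot|P(J-M)|$. Plugging both into the orbit-wise convolution formula yields
$$v(J) \;=\; \sum_{M} o_M\cdot k!\,|P(M)|\cdot (n-k)!\,|P(J-M)| \;=\; (n-k)!\;k!\;\sum_{M} o_M\cdot |P(M)|\cdot |P(J-M)|,$$
pulling the constants $k!$ and $(n-k)!$ outside the sum. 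Finally, $v(J)$ equals the number of Latin squares of size $n$ by Proposition 4.8 (the statement $v(J) = n!\cdot v(J-I)$ and the accompanying remark that this is the Latin square count), or equivalently directly from the identification in Section 3 of Latin squares with maximal chains from $\hat 0$ to $J$ recorded top-down. That identification also shows every maximal chain is counted exactly once, so no overcounting correction is needed. The claim that $|P(M)|$ depends only on the orbit of $M$ is already part of Theorem 12.1, so it requires nothing further here.

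There is essentially no obstacle: the corollary is a formal consequence of Theorem 12.1 together with the orbit convolution identity, both assumed. The only point requiring a word of care is that the convolution formula must be the \emph{orbit-weighted} version, i.e.\ one must check that replacing a sum over individual rank-$k$ elements by a sum over orbit representatives with multiplicity $o_M$ is legitimate — this is immediate because $v(M)$, $v(J-M)$, and membership in an orbit are all $G$-invariant, and the $G$-orbits partition the rank-$k$ elements. A second minor point is the identity $|P(J-M)|$ depending only on the orbit of $M$ rather than of $J-M$; but $M\mapsto J-M$ carries orbits bijectively to orbits, so the two statements are equivalent, and one may as well phrase the sum using representatives of rank-$k$ orbits as stated.
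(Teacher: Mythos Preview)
Your proof is correct and follows precisely the approach the paper intends: the corollary is presented there as an immediate consequence of ``applying the theorem to poset convolution,'' i.e.\ substituting $v(M)=k!\,|P(M)|$ and $v(J-M)=(n-k)!\,|P(J-M)|$ from Theorem~12.1 into the orbit-weighted convolution identity $v(J)=\sum_M o_M\, v(M)\, v(J-M)$ used in Sections~5--7. Your additional remarks on $G$-invariance and the orbit-preserving nature of $M\mapsto J-M$ make explicit exactly the points the paper leaves implicit.
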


\begin{figure}[ht]
{\tiny
 \begin{tikzpicture}
  \node (d) at (0, 7) {$2$};
  \node (c) at (0, 6) {$1$};
  \node (b) at (0, 5)  {$1$};
  \node (a) at (0,4) {$1$};
  \draw  (a) edge (b) (b) edge (c) (c) edge (d);
\end{tikzpicture}\hspace{30pt} 
{\tiny
 \begin{tikzpicture}
   \node (e) at (3, 4) {$24$};
  \node (d) at (3,  3) {$4$};
  \node (c1) at (4, 2) {$2$};
  \node (c) at (2, 2) {$1$};
  \node (b) at (3, 1)  {$1$};
  \node (a) at (3,0) {$1$};
  \draw  (a) edge (b) (b) edge (c) (c) edge (d)  (d) edge (e) (b) edge (c1) (c1) edge (d);
\end{tikzpicture}
}\hspace{30pt}
{\tiny
 \begin{tikzpicture}
  \node (e) at (3, 9) {$1344$};
  \node (d2) at (3,  8) {$56$};
  \node (d1) at (1.5,  7) {$6$};
  \node (d) at (4.5,  7) {$4$};
  \node (c1) at (1.5, 6) {$2$};
  \node (c) at (4.5, 6) {$1$};
  \node (b) at (3, 5)  {$1$};
  \node (a) at (3,4) {$1$};
  \draw  (a) edge (b) (b) edge (c) (c) edge (d) (c) edge (d1) (d2) edge (e) (b) edge (c1) (d1) edge (d2) (d) edge (d2) (c1) edge (d1);
\end{tikzpicture}
}
}
\caption{Number of distinct sums $|P(M)|$ for $M$ with $3\le n\le 5$}
\end{figure}
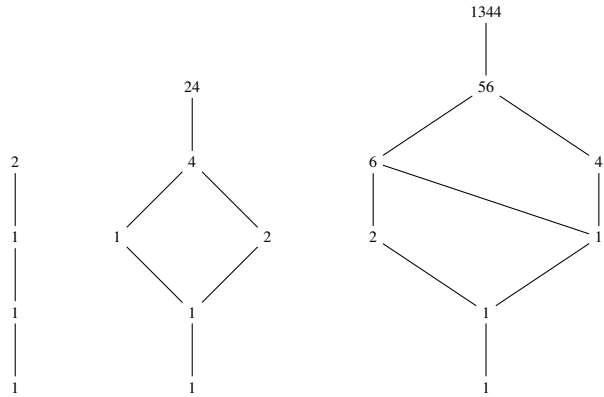

The extraction process of section 5 may be adapted to determine all elements of $P(M)$.  The resulting strings are the only permutations allowed in the construction of a sum for $M$. If these strings are listed lexicographically, we may then implement a sorting method to determine all combinations that sum to $M.$ For instance, with $M=J-P_{(13)(24)}$ in Figure 2, the tree gives the nine strings
$$1234,\ 1243,\ 1324,\ 2134,\ 2143,\ 2341,\ 4123,\ 4231,\ 4321.$$
Using the  tree in Figure 8, we obtain the 4 rectangles that represent $J-P_{(13)(24)}:$
$$
\begin{matrix}
1 & 2 & 3  & 4 \\
2 & 1 & 4 & 3  \\
4 & 3 &  2 & 1 \\
\end{matrix}\qquad 
\begin{matrix}
1 & 2 &  3 & 4\\
2 & 3 & 4 & 1 \\
4 & 1 &  2 & 3 \\
\end{matrix} \qquad
\begin{matrix}
1 & 2 &  4 & 3\\
2 & 1 & 3 & 4 \\
4 & 3 &  2 & 1 \\
\end{matrix}\qquad
\begin{matrix}
1 & 3 &  2 & 4\\
2 & 1 & 4 & 3 \\
4 & 2 &  3 & 1 \\
\end{matrix}.
$$

\begin{figure}
\begin{forest}
for tree={
  l sep=10pt,
  parent anchor=south,
  align=center
}
[
  [1234\\
    [2143\\
      [4321\\     
      ]
       ]
     [2341\\
       [4123\\
       ]
      ]
  ]
  [1243\\
    [2134\\
      [4321\\       
      ]
    ]
  ]  
  [1324\\
    [2143\\
      [4231\\        
      ]
      ]
    ] 
  ]
]
\end{forest}
\caption{Distinct sums for $J-P_{(13)(24)}$ in $M(4, 1)$}
\end{figure}
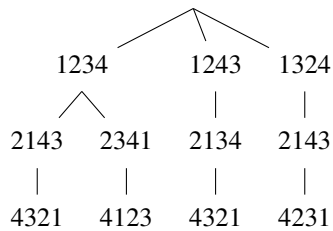

\bibliographystyle{amsplain}

\end{document}